\setlist{nolistsep}
\theoremstyle{plain}
\newtheorem{theorem}{Theorem}
\newtheorem{proposition}{Proposition}[section]
\newtheorem{lemma}[proposition]{Lemma}
\newtheorem{corollary}[proposition]{Corollary}
\newtheorem{definition}[proposition]{Definition}
\newtheorem{example}[proposition]{Example}
\newtheorem*{thm:simple-basis}{Theorem  \ref{thm:simple-basis}}
\newtheorem*{thm:semi-fund-basis}{Theorem \ref{thm:semi-fund-basis}}
\newtheorem*{thm:top-ext-seq-alg}{Theorem \ref{thm:top-ext-seq-alg}}
\newtheorem*{thm:top-ext-comp-chain}{Theorem \ref{thm:top-ext-comp-chain}}
\newtheorem*{thm:lin-hull-groups}{Theorem \ref{thm:lin-hull-groups}}
\newtheorem*{thm:lin-hull-vector-sp}{Theorem \ref{thm:lin-hull-vector-sp}}
\newcommand{\script}[1]{\mathcal{#1}}
\newcommand{\bb}[1]{\mathbb{#1}}
\newcommand{\defn}[1]{\textbf{#1}}
\newcommand{\defeq}{\coloneqq}
\newcommand{\Q}{\bb{Q}}
\newcommand{\Z}{\bb{Z}}
\newcommand{\set}[2]{\left\{ {#1} \, : \, {#2} \right\}}
\newcommand{\card}[1]{\left|#1\right|}
\newcommand{\intersect}{\cap}
\newcommand{\union}{\cup}
\newcommand{\Union}{\bigcup}
\DeclareMathOperator{\symdiff}{\Delta}
\newcommand{\cycles}{\script{C}}
\DeclareMathOperator{\ci}{ci}
\DeclareMathOperator{\bo}{bo}
\DeclareMathOperator{\diam}{diam}
\DeclareMathOperator{\lattice}{Lat}
\DeclareMathOperator{\linhull}{Lin.Hull}
\newcommand{\indVect}[1]{\chi_{#1}}
\newcommand{\gpIndex}[2]{\left[{#1} : {#2} \right]}
\newcommand{\basis}{\script{B}}
\newcommand{\tensor}{\otimes}
\newcommand{\Top}{\operatorname{Top}}
\newcommand{\proj}[2]{\left({#2}\right)\hspace{-0.23em}_{#1}}
\newcommand{\clDet}[1]{\det\!\big(\lattice(\cycles({#1}))\big)}
\begin{document}

\title{The Lattice of Cycles of an Undirected Graph}

\author{G.\ Averkov, A.\ Chavez, J.\ A.\ De Loera, B.\ Gillespie}

\maketitle

\begin{abstract}
  We study bases of the lattice generated by the cycles of an undirected graph, defined as the integer linear combinations of the 0/1-incidence vectors of cycles.  We prove structural results for this lattice, including explicit formulas for its dimension and determinant, and we present efficient algorithms to construct lattice bases, using only cycles as generators, in quadratic time.  By algebraic considerations, we relate these results to the more general setting with coefficients from an arbitrary Abelian group.  Our results generalize classical results for the vector space of cycles of a graph over the binary field to the case of an arbitrary field.
\end{abstract}


\section{Introduction}
\label{sec:introduction}

The structure of the cycles of a graph is a rich topic with challenging problems.  Consider for instance three famous covering problems on the set of all cycles of a graph: The \emph{double cover conjecture} states that for any bridgeless graph there exists a list of cycles that contains every edge twice.  \emph{Goddyn's conjecture} further states that if $G$ is a bridgeless graph and $C$ is a cycle in $G$, then there exists a double cover of $G$ containing the cycle $C$.  An \emph{$m$-cycle $k$-cover} is a list of $m$ Eulerian subgraphs covering each edge exactly $k$ times. For example, every bridgeless graph admits a $7$-cycle $4$-cover, but it is an open problem to decide whether every cubic bridgeless graph has a $6$-cycle $4$-cover.
For details see \cite{bermond+jackson+jaeger,diestel_graph_2017,cqzhang} and references therein. Motivated by such \emph{covering and packing problems} using cycles, and relying on the linear structure, this paper studies the lattice generated by the cycles of an undirected connected graph $G$, i.e., the set of all integer linear combinations of $0/1$-incidence vectors of cycles of $G$. We call it the \emph{cycle lattice} of the graph $G$.

Studying the lattices generated by incidence vectors of combinatorial objects is a technique which has been used to model combinatorial problems.  A classical example is of course the case of directed graphs, which has many applications (see the survey \cite{kavitha_cycle_2009}).  The technique has also been used in several cases for undirected graphs, including for matchings, cuts, and cycles (see \cite{goddyn_cones_1991,goddyn_cuts,LLovasz,S78} and references therein).  From these last examples we take inspiration; we provide theoretical and computational results about the bases of the cycle lattice of an undirected connected graph, and some consequences.

In what follows $G = (V, E)$ will denote a connected undirected graph with vertices $V$ and edges $E$.  In general, we allow loops and parallel edges unless otherwise noted.  A \emph{cycle} is a connected subgraph of $G$ with each vertex having degree two, and we write $\cycles(G)$ for the collection of cycles of $G$.  We will usually regard cycles and trees as subsets of $E$.

If $A \subseteq E$, then let $\indVect{A} \in \Z^E$ denote the characteristic vector of $A$.  For a collection $\script{A}$ of subsets of $E$, define the \emph{lattice} of $\script{A}$ by
\[
  \lattice(\script{A}) \defeq \set{\sum_{A \in \script{A}} n_A \indVect{A}}{n_A \in \Z} \subseteq \Z^E.
\]
If $K$ is a field, or more generally an Abelian group, then we define the \emph{$K$-linear hull} of $\script{A}$ to be
\[
  \linhull_K(\script{A})
  \defeq \lattice(\script{A}) \tensor_{\Z} K
  = \set{\sum_{A \in \script{A}} n_A \indVect{A}}{n_A \in K}.
\]

We are interested in studying the properties of these spaces when $\script{A} = \cycles(G)$.  The lattice $\lattice(\cycles(G))$ we call the \emph{cycle lattice} of $G$, and the $\Q$-linear hull $\linhull_{\Q}(\cycles(G))$ we call the \emph{rational cycle space} of $G$.
In Section \ref{sec:cycle-lattice} we give structural and algorithmic results on the cycle lattice of a graph, while in Section \ref{sec:applications-lin-hulls} we explore the consequences of these results on the $K$-linear hulls of graph cycles for different choices of $K$.

The classical \emph{binary cycle space} of $G$ in particular fits into this framework as the linear hull for the choice $K = \Z / 2\Z$.  In our more general setting, we are able to give a dimension formula for arbitrary fields $K$, and we describe a structural characterization of linear hulls for general Abelian groups which sheds light on the special role played in this theory by fields of characteristic 2.
It is also worth mentioning that several authors have considered the case of \emph{directed graphs}, which have different behavior.  For more details on the other linear spaces generated by cycles, see \cite{kavitha_cycle_2009}.

The following result by Goddyn (see \cite[Prop.~2.1]{goddyn_cones_1991}) is our starting point.  It characterizes the rational cycle space via series classes of $E$ (i.e., $e, f \in E$ are \emph{in series} if they are in the same cycles).
\begin{proposition}
  \label{prop:cycle-space-dimension}
  Let $G = (V, E)$ be a graph.  Then the rational cycle space $\linhull_{\Q}(\cycles(G))$ is given by
  \[
    \set{p \in \Q^E}{p_e = 0 \text{ for any bridge } e, \text{ and } p_e = p_f \text{ for } e \text{ and } f \text{ in series}}.
  \]
\end{proposition}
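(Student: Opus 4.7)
The plan is to establish the two inclusions separately. The inclusion $\linhull_{\Q}(\cycles(G)) \subseteq \{p \in \Q^E : p_e = 0 \text{ for bridges } e,\ p_e = p_f \text{ for } e, f \text{ in series}\}$ is immediate from the definitions: no cycle contains a bridge, so $\chi_C(e) = 0$ for any bridge $e$; and by the definition of being in series, every cycle contains both $e$ and $f$ or neither, so $\chi_C(e) = \chi_C(f)$. Both conditions are linear and pass to arbitrary $\Q$-linear combinations.

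For the reverse inclusion, I would first reduce to the case in which $G$ is $2$-edge-connected. Since no cycle contains a bridge, the $\Q$-span of cycle indicators is supported off the bridges, matching the condition $p_e = 0$ there. Every cycle of $G$ lies inside a single $2$-edge-connected block (a maximal subgraph in which any two edges share a common cycle), so $\linhull_{\Q}(\cycles(G))$ decomposes as a direct sum over these blocks, as does the right-hand side. It therefore suffices to prove the equality when $G$ is $2$-edge-connected. In that case, the series classes $S_1,\ldots,S_m$ partition $E$, and any $p$ in the right-hand side is constant on each $S_i$, so $p = \sum_i p_i \chi_{S_i}$. The task thus reduces to showing $\chi_S \in \linhull_{\Q}(\cycles(G))$ for every series class $S$.

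I would prove this last claim by induction on $|E(G)|$ using an ear decomposition $G_0 \subset G_1 \subset \cdots \subset G_k = G$ of the $2$-edge-connected graph $G$, where $G_0$ is a cycle and each $G_{i+1}$ is obtained from $G_i$ by attaching an open ear (a path with endpoints in $G_i$ and new internal vertices; single-edge ears are allowed). The base case is a single cycle, where the unique series class is $E(G_0)$ itself and its indicator is a cycle indicator. In the inductive step, attaching an ear $P$ from $u$ to $v$ creates exactly the new cycles of the form $P \cup Q$ for $u$--$v$-paths $Q$ in $G_i$; combining these with cycles of $G_i$ and the inductive hypothesis, one expresses $\chi_S$ for each series class $S$ of $G_{i+1}$ as a rational combination of cycles.

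The main obstacle will be the bookkeeping at the inductive step: attaching $P$ can split an existing series class of $G_i$ into smaller classes of $G_{i+1}$, because the new cycles through $P$ may separate edges that were previously indistinguishable; and the internal edges of $P$ form an entirely new series class. Controlling these structural changes—for example by using the equivalent characterization that in a $2$-edge-connected graph two distinct edges are in series iff they form a $2$-edge-cut—is the technical heart of the argument.
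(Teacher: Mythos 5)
The paper does not actually prove Proposition~\ref{prop:cycle-space-dimension}; it imports it from Goddyn, so there is no in-paper proof to compare against. Judged on its own terms, your easy inclusion is fine, and the high-level strategy for the converse is sound: after discarding bridge coordinates and decomposing over blocks, the right-hand side is spanned by the indicators $\chi_S$ of the series classes $S$, so it suffices to show each $\chi_S$ lies in $\linhull_{\Q}(\cycles(G))$. One terminological slip: the parenthetical ``maximal subgraph in which any two edges share a common cycle'' describes the $2$-\emph{vertex}-connected blocks rather than the $2$-edge-connected components (consider two triangles meeting at a vertex), and open ear decompositions exist for the former but not in general for the latter; if you really only reduce to $2$-edge-connectedness you must allow closed ears as well.

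The more substantive issue is that you stop exactly where the content is. You correctly identify that the inductive step of the ear argument requires tracking how series classes split when an ear is added, and you declare this ``the technical heart of the argument'' without carrying it out. That step \emph{is} the proposition. As written, the proposal is a plausible plan, not a proof. A cleaner and shorter route, consistent with the machinery this paper develops, is to push the reduction one step further: delete the bridges and contract all but one edge of each nontrivial series class (the cosimplification $\hat{G}$ of Lemma~\ref{lem:3-edge-conn-reduction}). Each component of $\hat{G}$ is $3$-edge-connected, the contraction map gives an isomorphism on cycle lattices by Lemma~\ref{lem:series-contractions}, and contracted series classes become single edges. Then one shows $\chi_{\hat e}\in\linhull_{\Q}(\cycles(\hat G))$ for every edge $\hat e$ directly from Menger's theorem exactly as in the proof of Lemma~\ref{lem:twice-edges-in-lattice}: pick two edge-disjoint paths $P,Q$ between the endpoints of $\hat e$ avoiding $\hat e$, note $P\cup\hat e$ and $Q\cup\hat e$ are cycles, decompose the Eulerian subgraph $P\cup Q$ into cycles $C_1,\dots,C_j$, and compute $2\chi_{\hat e}=\chi_{P\cup\hat e}+\chi_{Q\cup\hat e}-\sum_i\chi_{C_i}$. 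Pulling back across the cosimplification isomorphism recovers $\chi_S$ for every series class $S$, with no ear-by-ear bookkeeping. I would recommend replacing the ear-decomposition induction with this argument, or else actually writing out the inductive step you sketch.
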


In particular, Proposition \ref{prop:cycle-space-dimension} implies that the rational cycle space and the cycle lattice of $G$ are full-dimensional if and only if $G$ has no bridges and no nontrivial series classes, or equivalently when the graph is 3-edge-connected.
In our analysis we will see that there is no loss of generality in assuming that $G$ is 3-edge-connected (in particular see Lemma~\ref{lem:3-edge-conn-reduction}).

\subsection*{Our Contributions}

Our first main result on the cycle lattice is a key building block utilized throughout the paper.

\begin{theorem}
  \label{thm:simple-basis}
  Let $G = (V, E)$ be a 3-edge-connected graph, and let $T \subseteq E$ be a spanning tree of $G$.  Consider the sets
  \[
    \cycles_T \defeq \set{\indVect{\ci(e, T)}}{e \in E \setminus T} \quad \text{and} \quad X_T \defeq \set{2 \indVect{t}}{t \in T},
  \]
  where $\ci(e, T)$ denotes the unique cycle contained in $T \union e \subset E$.  Then the collection $\cycles_T \union X_T$ is a basis for the cycle lattice of $G$.  Moreover, the determinant of this lattice is given by
  \[
    \det\!\big( \lattice( \cycles(G) ) \big) = 2^{\card{T}}.
  \]
\end{theorem}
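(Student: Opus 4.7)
The plan is to establish the theorem in three parts: (i) verify that every element of $\cycles_T \cup X_T$ lies in the cycle lattice, (ii) compute the determinant of the sublattice they span via a block-triangular matrix, and (iii) show by a parity argument that the sublattice they generate contains every cycle vector. Since (i) and (iii) together give equality of the two lattices, the vectors form a basis and the determinant formula drops out of (ii).

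For (i), the fundamental cycles are cycles, so $\cycles_T \subseteq \lattice(\cycles(G))$ is immediate. The interesting case is $2\indVect{t} \in \lattice(\cycles(G))$ for a tree edge $t$. Because $G$ is 3-edge-connected (in particular 2-edge-connected), Menger's theorem provides two edge-disjoint paths $P_1,P_2$ between the two endpoints of $t$; then $C_i \defeq P_i \cup \{t\}$ is a cycle, and $C_1 \cap C_2 = \{t\}$. The symmetric difference $C_1 \symdiff C_2 = P_1 \cup P_2$ is an edge-disjoint union of cycles $D_1,\dots,D_k$, so
\[
2\indVect{t} \;=\; \indVect{C_1} + \indVect{C_2} - \sum_{j=1}^{k} \indVect{D_j} \;\in\; \lattice(\cycles(G)).
\]

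For (ii), order the coordinates as $T$ followed by $E\setminus T$, and stack the generators in the same order. Each $2\indVect{t}$ has a single entry $2$ at position $t$ and zeros elsewhere, while each $\indVect{\ci(e,T)}$ has a $1$ at its non-tree edge $e$ together with $\{0,1\}$-entries on tree coordinates. In block form the generator matrix is
\[
M = \begin{pmatrix} 2\, I_{\card{T}} & A \\ 0 & I_{\card{E\setminus T}} \end{pmatrix},
\]
which is upper block triangular with $\card{\det M}=2^{\card{T}}$. In particular the generators are $\Z$-linearly independent and span a sublattice $\Lambda \subseteq \lattice(\cycles(G))$ of determinant $2^{\card{T}}$.

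For (iii), let $C$ be an arbitrary cycle. The classical fact that fundamental cycles form a basis of the binary cycle space gives $C = \symdiff_{e \in C\setminus T} \ci(e,T)$, hence
\[
\indVect{C} \;\equiv\; \sum_{e \in C\setminus T} \indVect{\ci(e,T)} \pmod{2}.
\]
So the integer vector $v \defeq \indVect{C} - \sum_{e \in C\setminus T}\indVect{\ci(e,T)}$ has all even entries. Restricted to a non-tree coordinate $e' \in E\setminus T$, both terms equal $1$ precisely when $e'\in C\setminus T$, so $v$ is supported on $T$. Therefore $v$ is a $\Z$-combination of $X_T$, showing $\indVect{C}\in \Lambda$. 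Combined with (i), we conclude $\Lambda=\lattice(\cycles(G))$, which simultaneously proves that $\cycles_T\cup X_T$ is a basis and that $\det\!\big(\lattice(\cycles(G))\big)=2^{\card{T}}$.

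I expect the one step requiring genuine care (rather than bookkeeping) to be (i), namely producing $2\indVect{t}$ from cycles. The symmetric-difference/Menger trick above is the natural device; the rest of the argument is essentially a determinant computation plus the standard mod-$2$ decomposition into fundamental cycles.
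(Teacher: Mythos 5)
Your proof is essentially correct and combines elements of both of the paper's two proofs of Proposition~\ref{prop:simple-basis}. Step (i) re-derives (for tree edges) the paper's Lemma~\ref{lem:twice-edges-in-lattice} that $2\indVect{e}\in\lattice(\cycles(G))$; step (ii) reproduces the block-triangular determinant computation from the paper's Proof~1; and step (iii) replaces the paper's separate group-index computation of $\det(\lattice(\cycles(G)))$ (Proposition~\ref{prop:cycle-lattice-det}) with a direct argument that the generators span the entire lattice. Your step (iii) is in the same spirit as the paper's Proof~2, but where that proof uses the even parity of a cycle across each fundamental bond $\bo(t,T)$, you instead invoke the mod-2 decomposition of $\indVect{C}$ into fundamental cycles to conclude the residual vector $v$ is even and $T$-supported. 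Both arguments are correct; yours has the mild advantage of being self-contained, since the determinant formula then falls out of (ii) without needing the separate index calculation.

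One small slip in step (i): you invoke 2-edge-connectivity and Menger's theorem to obtain two edge-disjoint paths $P_1,P_2$ between the endpoints $u,v$ of $t$, but for your construction you need these paths to \emph{avoid} the edge $t$. Two arbitrary edge-disjoint $u$-$v$ paths in a 2-edge-connected graph may include the length-one path $\{t\}$ itself, in which case $P_i\cup\{t\}=\{t\}$ is not a cycle (note $t$ is a tree edge, hence not a loop). The fix is standard and uses the full strength of 3-edge-connectivity, exactly as the paper does: either take three edge-disjoint $u$-$v$ paths and discard the at most one using $t$, or apply Menger in $G\setminus t$, observing that every $u$-$v$ cut there still has at least two edges. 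With that adjustment, the argument is complete.
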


The lattice bases provided by this result are a natural extension to the well-known \emph{fundamental cycle bases} of the binary cycle space, but have the disadvantage that they include elements which are not cycles.
With some additional work, we are able to algorithmically produce lattice bases consisting only of cycles.
Note that this is in stark contrast to the lattices of other natural collections such as matchings \cite{LLovasz} and cuts \cite{goddyn_cuts}, which do not always have bases consisting of sets in the generating collection.

A collection of cycles of $G$ is called a \emph{lattice cycle basis} if its indicator vectors form a basis of the cycle lattice.  In Section~\ref{sec:cycle-lattice}, we show the existence of lattice cycle bases of a graph $G$, and we give two algorithmic constructions for such bases.

A lattice cycle basis is called a \emph{semi-fundamental basis} with respect to a spanning forest $F$ if it consists of all of the fundamental cycles of $F$, along with some additional cycles containing exactly two edges outside of $F$, called \emph{semi-fundamental cycles}.
In Section~\ref{subsec:semi-fund-basis-alg}, we describe an efficient algorithm to compute semi-fundamental lattice cycle bases of a graph $G$ with respect to a choice of spanning forest in quadratic time, producing the following.

\begin{theorem}
  \label{thm:semi-fund-basis}
  Let $G$ be a connected graph with $m$ edges and $n$ vertices.  Then a lattice cycle basis of $G$ exists, and can be constructed in time $O(mn)$.  If $T$ is any spanning tree of $G$, then the basis may be chosen to be semi-fundamental with respect to $T$.
\end{theorem}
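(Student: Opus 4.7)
The plan is to construct a basis consisting of the $m-n+1$ fundamental cycles $\ci(e, T)$ for $e \in E \setminus T$ (which must lie in every semi-fundamental basis by definition) together with $n-1$ additional semi-fundamental cycles built algorithmically; this simultaneously establishes existence and the semi-fundamental-basis claim. By Lemma~\ref{lem:3-edge-conn-reduction} we first reduce to the case where $G$ is 3-edge-connected; there Theorem~\ref{thm:simple-basis} applies and provides the reference basis $\{\indVect{\ci(e, T)}\}_{e \notin T} \union \{2 \indVect{t}\}_{t \in T}$ to compare candidate collections against.

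The central computation is the identity
\[
  \indVect{C} \;=\; \indVect{\ci(e_1, T)} + \indVect{\ci(e_2, T)} - 2\,\indVect{N(C)},
  \qquad N(C) \defeq P_1 \intersect P_2,
\]
valid for any semi-fundamental cycle $C$ with non-tree edges $e_1, e_2$ and corresponding tree paths $P_i \subseteq T$ between the endpoints of $e_i$; it is verified by checking that the tree part of $C$ equals $P_1 \symdiff P_2$, a consequence of the parity-of-crossings property of cycles on tree-edge cuts. As a consequence of this identity, a set of $n-1$ semi-fundamental cycles $C_1, \ldots, C_{n-1}$ combined with the fundamental cycles forms a lattice basis of $\lattice(\cycles(G))$ if and only if the $(n-1)\times(n-1)$ matrix whose rows are $\indVect{N(C_j)} \in \Z^T$ is unimodular. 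Because each $N(C_j)$ is a subpath of $T$ and edge--path incidence matrices of trees are totally unimodular, unimodularity of this matrix is equivalent to $\Q$-linear independence of its rows.

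For the construction, root $T$ at an arbitrary vertex and process the non-root vertices $v$ in DFS post-order. For each $v$, 3-edge-connectivity guarantees at least two non-tree edges in the cut defined by the tree edge $t_v$ from $v$ to its parent, and any such pair determines a semi-fundamental cycle $C_v$ automatically satisfying $t_v \in N(C_v)$. The main obstacle, and the heart of the argument, is to choose the pair at each step so that $\indVect{N(C_v)}$ is $\Q$-linearly independent from the rows produced in earlier iterations. This is handled with a local exchange argument: among candidate pairs in the $t_v$-cut, one selects a choice minimizing the overlap of the two tree paths outside the subtree rooted at $v$; if the resulting row happens to lie in the current span, swapping one of the chosen non-tree edges for another in the cut produces an alternative path $N(C_v)$ that restores independence, using that the ambient tree-path matroid is well-behaved under such exchanges.

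For the running time, an initial DFS of $G$ computes $T$, the fundamental cycles, and auxiliary tree-path data in $O(mn)$ time; at each of the $n-1$ non-root vertices, enumerating the non-tree edges in the $t_v$-cut and computing $N(C_v)$ as the intersection of two tree paths takes $O(m)$ time, yielding $O(mn)$ overall.
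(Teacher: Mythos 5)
Your central identity and the reduction to unimodularity are correct and parallel the paper's. For a semi-fundamental cycle $C = \ci(e_1,T)\symdiff\ci(e_2,T)$, the relation
\[
\indVect{C} \;=\; \indVect{\ci(e_1,T)} + \indVect{\ci(e_2,T)} - 2\,\indVect{N(C)},
\qquad N(C) = \ci(e_1,T)\cap\ci(e_2,T),
\]
is exactly the identity the paper uses (rearranged), and once the fundamental cycles are in place, appending $n-1$ semi-fundamental cycles yields a lattice basis if and only if the $(n-1)\times(n-1)$ matrix with rows $\indVect{N(C_j)}$ has determinant $\pm 1$. Up to here you track the paper's underlying logic.

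The gap comes when you replace the unimodularity requirement by $\Q$-linear independence, on the grounds that ``edge--path incidence matrices of trees are totally unimodular.'' That claim is false. Take $T$ a star with center $c$ and leaves $a,b,d$; the three tree paths $a\text{--}c\text{--}b$, $b\text{--}c\text{--}d$, $a\text{--}c\text{--}d$ give the edge--path incidence matrix
\[
\begin{pmatrix} 1 & 1 & 0 \\ 0 & 1 & 1 \\ 1 & 0 & 1 \end{pmatrix},
\]
which has determinant $2$. (The consecutive-ones/interval property that gives TU holds only when $T$ is itself a path, not for general trees.) So $\Q$-linear independence of the rows is strictly weaker than unimodularity, and your equivalence does not hold. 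Consequently your construction is aimed at the wrong invariant: even if the asserted ``local exchange argument'' preserved $\Q$-linear independence at every step --- and that step is stated, not proved --- it would not certify that the resulting $N$-matrix is unimodular, so it would not certify a lattice basis.

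For contrast, the paper forces unimodularity structurally rather than by an independence argument. Applying Lemma~\ref{lem:fund-intersection} to the successive contractions $G/\{t_1,\ldots,t_{k-1}\}$, it selects pairs $(e_k,f_k)$ whose fundamental-cycle intersections $A_k = \ci(e_k,T)\cap\ci(f_k,T)$ satisfy $t_k \in A_k \subseteq \{t_1,\ldots,t_k\}$. This nesting makes the $A_k$-matrix triangular with diagonal $1$'s, and it supports a short induction showing $2\indVect{t_k}$ lies in the lattice generated by the candidate collection, so that lattice contains the Proposition~\ref{prop:simple-basis} basis and hence equals the cycle lattice. Your DFS post-order choice guarantees only $t_v \in N(C_v)$ with the $t_v$ distinct, which is too weak to yield a triangular structure (or any determinant bound). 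To repair the argument you would need either to recover the contraction-based nesting $A_k \subseteq \{t_1,\ldots,t_k\}$, or to supply a different explicit certificate that the chosen $N$-matrix has determinant $\pm 1$.
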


A potentially useful property of the semi-fundamental bases given by this algorithm is that all cycles included have length bounded by $2 \diam(T)$; see Corollary \ref{cor:bdd-cycle-lengths}.  In particular, this bound may be quite nontrivial for common classes of graphs with high connectivity such as expander graphs and small-world networks \cite{bollobas_diameter_2004,hoory_expander_2006}.
Several papers investigate the variety of different cycle lengths possible in a graph, for example \cite{MR1692279,MR766494,MR2096823}, and our length bound indicates that in many cases, the cycles in a lattice basis may be chosen from a very restricted set.  Additionally, lattice generators with sparse support are relevant to applications in several areas of optimization; see \cite{aliev2020optimizing} and references.

If $G, H$ are graphs, then $G$ is called a \emph{topological one-edge extension} of $H$ if it is obtained from $H$ by connecting two vertices, either existing in $H$ or created by dividing edges of $H$ in two, by a new edge.  (See Definition \ref{def:top-one-edge-ext}.)
If $G$ is 3-edge-connected, then a sequence of topological one-edge extensions starting at the single-vertex graph and ending at $G$ is called a \emph{topological extension sequence} of $G$.  In particular, a graph $G$ is known to be 3-edge-connected if and only if it admits a topological extension sequence, and we present an algorithm to produce such a sequence.

\begin{theorem}
  \label{thm:top-ext-seq-alg}
	Let $G$ be a 3-edge-connected graph with $n$ vertices and $m$ edges.  Then a topological extension sequence for $G$ exists, and can be constructed in time $O(mn)$.
\end{theorem}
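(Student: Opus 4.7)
My plan is to build the sequence in reverse: starting from $G$, I iteratively \emph{reduce} the current graph by deleting an edge $e$ and suppressing any endpoints of $e$ that become degree-two vertices, continuing until only a single vertex remains. Each reduction step is the exact inverse of a topological one-edge extension, so recording the steps and reversing the list produces the desired topological extension sequence. The correctness of this reverse construction rests on a \emph{reduction lemma}: every 3-edge-connected graph $H$ with at least two vertices has an edge whose reduction yields a 3-edge-connected graph.

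For the reduction lemma, I would split by cases on the edge-connectivity $\lambda(H)$. If $\lambda(H) \geq 4$, then $H$ has minimum degree $\geq 4$, so deleting any edge preserves 3-edge-connectedness and triggers no suppressions, and the lemma is immediate. If $\lambda(H) = 3$, the analysis focuses on the structure of 3-edge-cuts: an edge $e$ is irreducible precisely when deletion and suppression create a cut of size at most~2, which forces $e$ into a 3-edge-cut with very particular interaction with its endpoints. Using submodularity of the edge-cut function, together with the standard fact that a minimally 3-edge-connected graph has a vertex of degree 3, a short case analysis on the three edges incident to such a vertex exhibits a reducible $e$. The existence of a topological extension sequence for $G$ then follows by induction on the number of edges.

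For the algorithm, the natural strategy is to iterate the reduction procedure: at each step, identify a candidate reducible edge (guided by the proof of the lemma, e.g., one incident to a degree-3 vertex) and verify in $O(m+n)$ time via a DFS-based 3-edge-connectivity test that the reduced graph is 3-edge-connected. By the lemma at most a constant number of candidates must be tried per step. Since a complete sequence has exactly $m$ steps (each extension adds one edge), a direct implementation runs in time $O(m(m+n))$, which is only $O(m^2)$.

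The main obstacle is improving this runtime to $O(mn)$. My plan is to avoid recomputing 3-edge-connectivity from scratch at each iteration by maintaining an incremental decomposition of the graph's 3-edge-cuts (for instance, a cactus-style representation). Each reduction affects only cuts passing through the deleted edge's immediate neighborhood, so local updates of cost $O(n)$ per step should suffice; summing over the $m$ iterations gives the target $O(mn)$. Specifying and correctly maintaining such an incremental data structure — in particular, showing that after each update a reducible edge can still be located in $O(n)$ time — is the primary technical challenge of the proof.
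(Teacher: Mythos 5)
Your plan runs in the opposite direction from the paper's, and it leaves the $O(mn)$ bound unresolved. The paper constructs the sequence \emph{forward}: starting from a single vertex, it greedily grows subgraphs $H_i$ by attaching ears $P_i$ (paths with both endpoints in $H_{i-1}$ and interior outside), and sets $G_i := \Top(H_i)$, the topological representative obtained by suppressing degree-two vertices. Because a topological one-edge extension of a 3-edge-connected graph is again 3-edge-connected (Lemma~\ref{lemma:top-ext-3ec}), each $G_i$ is \emph{automatically} 3-edge-connected, and no per-step connectivity test is required; 3-edge-connectivity of $G$ is used only once, to show that an ear starting at a degree-two vertex of $H_{i-1}$ can be routed to land outside the suppressed path through that vertex. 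The $O(mn)$ bound follows from an amortization: at most $3n$ ears need a depth-first search, because each such ear strictly increases the $H$-degree of some vertex currently below degree three, and the remaining single-edge ears are discovered by a linear scan after each search.

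Your backward plan instead needs the \emph{reduction lemma} (the hard direction of what the paper quotes as Proposition~\ref{growing-three-edge-connected}, essentially Mader's theorem for $l=1$): every 3-edge-connected graph on at least two vertices has an edge whose deletion-plus-suppression preserves 3-edge-connectivity. You only sketch this; the submodularity and degree-3-vertex case analysis is real work, and the forward construction sidesteps it entirely. More seriously, even granting the reduction lemma, your naive algorithm runs in $O(m(m+n)) = O(m^2)$, which is \emph{not} $O(mn)$: the paper allows parallel edges, so $m$ is not bounded by any function of $n$, and $m^2 / (mn) = m/n$ is unbounded. Your proposed remedy---maintaining an incremental cactus-type representation of the 3-edge-cuts under deletion and suppression with $O(n)$-time updates, and showing a reducible edge can still be located in $O(n)$ time---is precisely the missing content, and you give no argument that such a data structure exists with the claimed bounds. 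As written, the proposal could plausibly establish \emph{existence} of a topological extension sequence (once the reduction lemma is proved in full), but it does not establish the stated $O(mn)$ running time.
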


If $G_0, G_1, \ldots, G_k$ is a topological extension sequence of a graph $G$, then a nested sequence $(\cycles_i)$ with $\cycles_i$ a lattice cycle basis of $G_i$ is called a \emph{compatible chain} of lattice cycles bases.  In Section \ref{subsec:top-ext-basis-alg}, we give a different algorithm for a basis of the cycle lattice which produces a topological extension sequence and a compatible chain of lattice cycle bases.

\begin{theorem}
  \label{thm:top-ext-comp-chain}
  Let $G$ be a 3-edge-connected graph with $m$ edges and $n$ vertices.  Then a topological extension sequence of $G$ and a compatible chain of lattice cycle bases can be constructed in time $O(mn)$.
\end{theorem}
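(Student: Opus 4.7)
The strategy is to construct the topological extension sequence first, via Theorem~\ref{thm:top-ext-seq-alg} in time $O(mn)$, and then build the compatible chain of lattice cycle bases by induction along the sequence $G_0, G_1, \ldots, G_k = G$. Starting from $\cycles_0 = \emptyset$, the trivial lattice cycle basis of the single-vertex graph $G_0$, at each step I extend $\cycles_{i-1}$ to a lattice cycle basis $\cycles_i$ of $G_i$.

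For the inductive step, note that $G_i$ is obtained from $G_{i-1}$ by adding a new edge $e^*$ between two vertices, possibly after subdividing up to two existing edges $e_1, e_2$ (introducing up to two new vertices $w_1, w_2$). Every cycle in $\cycles_{i-1}$ lifts naturally to a cycle in $G_i$ by replacing each subdivided $e_j$ with the pair $\{e_j', e_j''\}$. I would then augment this lifted collection with $d$ new cycles (where $d \in \{1, 2, 3\}$ is the net edge increase), each passing through $e^*$; these are obtained by taking simple paths in $G_{i-1} \setminus \{e_1, e_2\}$, which exist since $G_{i-1}$ is 3-edge-connected, between appropriate endpoints and concatenating them with $e^*$ and chosen subdivision edges at each $w_j$.

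To verify that $\cycles_i$ is a lattice cycle basis of $G_i$, I would compute the determinant of the associated $m_i \times m_i$ basis matrix and match it against the target value $2^{n_i - 1}$ from Theorem~\ref{thm:simple-basis}. Performing the column operations $c_{e_j''} \leftarrow c_{e_j''} - c_{e_j'}$ makes the lifted old cycles vanish on the columns for $e_j''$ and $e^*$, producing a block-triangular matrix whose determinant equals $\det(M_{i-1}) \cdot |\det(R)|$, where $R$ is the $d \times d$ block of the new cycles' entries on these columns after the operations. The key step is to choose, for each new cycle through $e^*$, which of the edges $e_j', e_j''$ it uses at $w_j$ (its ``type'') so that $|\det(R)| = 2^{d-1}$; I would verify by direct computation that such choices exist in each of the three cases. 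The case $d = 3$ (two subdivisions) is the most intricate, requiring three cycles of carefully chosen distinct types through $e^*$ so that $R$ has determinant $\pm 4$; this determinant verification is the main technical obstacle.

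Finally, for the running time: Theorem~\ref{thm:top-ext-seq-alg} gives the sequence in $O(mn)$, and at each step the $d \leq 3$ new cycles are found by a bounded number of path-searches in $G_{i-1}$. Following the same amortization pattern used to prove Theorem~\ref{thm:semi-fund-basis}, the overall work across all steps can be kept to $O(mn)$, concluding the proof.
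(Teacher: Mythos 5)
Your overall strategy matches the paper's: compute the extension sequence via Theorem~\ref{thm:top-ext-seq-alg}, then extend bases step-by-step using determinant verification against $\clDet{G_i}$. The inductive determinant argument you sketch is precisely the content of Proposition~\ref{prop:growing-basis}, which the paper isolates and proves by the same block-triangular reduction you describe.

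However, your running-time analysis has a genuine gap. An extension sequence has length $O(m)$, and your plan finds the new cycles at each step by ``path-searches in $G_{i-1}$.'' A path search (BFS/DFS) in $G_{i-1}$ costs $O(m_{i-1})$, which over $O(m)$ iterations gives $O(m^2)$, not $O(mn)$; for dense (or multi-) graphs $m$ can be $\Theta(n^2)$ or larger, so $O(m^2)$ genuinely exceeds $O(mn)$. The paper resolves this by treating the extension types asymmetrically: extensions of types \ref{type:1-divide} and \ref{type:2-divide} each add a vertex, so there are only $O(n)$ of them, and one can afford $O(m)$ work for each via Proposition~\ref{prop:growing-basis}; for type \ref{type:0-divide} extensions, of which there may be $\Theta(m)$, the paper maintains a spanning tree $T_i$ of $G_i$ alongside the basis and produces the single new cycle from the unique $T_i$-path between the endpoints of the new edge, in $O(n)$ per step. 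Your appeal to ``the same amortization pattern used to prove Theorem~\ref{thm:semi-fund-basis}'' does not apply here, since that proof has only $n-1$ iterations, whereas here the iteration count is $O(m)$ and must be balanced against a cheaper $O(n)$ per-step cost for the abundant type-\ref{type:0-divide} case. Adding the spanning-tree bookkeeping and the type-based case split would close the gap.
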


The construction of the above algorithm can additionally be extended to general connected graphs using the reduction of Lemma \ref{lem:3-edge-conn-reduction}.

Finally, in Section~\ref{sec:applications-lin-hulls} we relate the cycle lattice of a graph to the $A$-linear hull for $A$ an Abelian group.  The main structural result is given by the following.
\begin{theorem}
  \label{thm:lin-hull-groups}
  Let $G = (V, E)$ be a 3-edge-connected graph with $m$ edges and $n$ vertices, and let $A$ be an Abelian group.  Then
	\[
		\linhull_A(\cycles(G)) \simeq (2A)^{n-1} \oplus A^{m-n+1}.
	\]
\end{theorem}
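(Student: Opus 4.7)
The plan is to invoke Theorem \ref{thm:simple-basis} to reduce the question to a single explicit matrix calculation. Fix a spanning tree $T$ of $G$, and let $\basis = \cycles_T \union X_T$ be the lattice basis provided by that theorem. First I would verify that $\linhull_A(\cycles(G))$, regarded as a subgroup of $A^E$, equals the subgroup generated by $\basis$: since every $\indVect{C}$ is a $\Z$-combination of elements of $\basis$, any expression $a \indVect{C}$ with $a \in A$ is an $A$-combination of vectors in $\basis$, and conversely. This sidesteps any subtlety about tensor products by working entirely with the concrete subgroup of $A^E$.

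Next I would write down the matrix $M \in \Z^{E \times \basis}$ whose columns are the elements of $\basis$, ordering the rows so that edges of $E \setminus T$ come first and edges of $T$ come second, and ordering the columns so that the fundamental cycles $\indVect{\ci(e,T)}$ come first and the vectors $2\indVect{t}$ come second. Using that $\ci(e,T)$ contains the chord $e$ but no other edge of $E \setminus T$, and that each $2 \indVect{t}$ is supported on $T$, the matrix takes the block form
\[
  M \;=\; \begin{pmatrix} I_{m-n+1} & 0 \\ C & 2 I_{n-1} \end{pmatrix},
\]
where $C$ is the $0/1$ matrix whose $(t,e)$ entry is $1$ iff $t \in \ci(e,T)$. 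The group $\linhull_A(\cycles(G))$ is then exactly the image of the homomorphism $\psi \colon A^{\basis} \to A^E$ associated with $M$.

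Now I would apply integer row operations: using the identity block in the upper-left to clear $C$, one obtains an invertible $P \in \mathrm{GL}_m(\Z)$ such that
\[
  P M \;=\; \begin{pmatrix} I_{m-n+1} & 0 \\ 0 & 2 I_{n-1} \end{pmatrix}.
\]
Because $P$ induces an automorphism of the Abelian group $A^E$, the image of $\psi$ is isomorphic (as an Abelian group) to the image of the map with matrix $P M$. But the latter image is visibly $A^{m-n+1} \oplus (2A)^{n-1}$: the first $m-n+1$ columns generate one free copy of $A$ in each coordinate of $E \setminus T$, and the last $n-1$ columns generate one copy of $2A$ in each coordinate of $T$. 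This yields the desired isomorphism.

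I do not expect any serious obstacle; the only points demanding care are justifying that integer-invertible row operations preserve the image up to group isomorphism (they induce automorphisms of $A^E$, so they do) and that passage from the $\Z$-basis in Theorem~\ref{thm:simple-basis} to an $A$-generating set is legitimate. As a cross-check, one could alternatively compute $\ker \psi = 0 \oplus A[2]^{n-1}$ (where $A[2]$ denotes the $2$-torsion subgroup) and apply the first isomorphism theorem together with the identity $A/A[2] \cong 2A$ induced by multiplication by $2$; this gives the same conclusion and confirms the shape of the answer.
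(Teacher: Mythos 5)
Your proof is correct and follows essentially the same path as the paper's: both take the lattice basis $\cycles_T \cup X_T$ from Theorem~\ref{thm:simple-basis}, identify $\linhull_A(\cycles(G))$ with the $A$-span of that basis, and then exploit the block-triangular structure of the basis matrix. The paper phrases this as a direct construction of a homomorphism $\phi\colon (2A)^T \oplus A^{E\setminus T} \to \linhull_A(\cycles(G))$ with surjectivity and injectivity verified coordinate-by-coordinate, while you package the same triangular structure as a $\mathrm{GL}_m(\Z)$ row reduction to $\mathrm{diag}(I_{m-n+1}, 2I_{n-1})$; the two are interchangeable formulations of one argument.
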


This result is applied in Theorem \ref{thm:lin-hull-vector-sp} to the case when $A$ is a field, generalizing known results about the classical binary cycle space:
\begin{theorem}
  \label{thm:lin-hull-vector-sp}
  Let $G = (V, E)$ be a 3-edge-connected graph with $m$ edges and $n$ vertices, and let $K$ be a field of characteristic $p$.  Then $\linhull_K(\cycles(G))$ is a $K$-vector space of dimension
	\[
		\dim_K\!\big(\linhull_K(\cycles(G))\big)
    =
		\begin{cases}
			m, & \text{if } p \ne 2,
			\\ m-n+1, & \text{if } p = 2.
		\end{cases}
	\]
  If $p \neq 2$, then any lattice basis of $\lattice(\cycles(G))$ reduces modulo $p$ to a linear basis of $\linhull_K(\cycles(G))$.  If $p = 2$, then any basis of the classical binary cycle space maps to a linear basis of $\linhull_K(\cycles(G))$ under the natural inclusion map.
\end{theorem}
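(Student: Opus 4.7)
The approach is to derive both the dimension formula and the basis statements directly from Theorem \ref{thm:lin-hull-groups} (specialized to $A = K$), together with Theorem \ref{thm:simple-basis} for the odd-characteristic basis and standard facts about the binary cycle space for the even case.

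First, setting $A = K$ in Theorem \ref{thm:lin-hull-groups} yields $\linhull_K(\cycles(G)) \simeq (2K)^{n-1} \oplus K^{m-n+1}$. If $p \neq 2$, the element $2$ is invertible in $K$, so $2K = K$ and the right-hand side is an $m$-dimensional $K$-vector space. If $p = 2$, then $2K = 0$ and the dimension collapses to $m-n+1$. This settles the dimension formula.

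For the basis claim when $p \neq 2$: let $\basis = \{b_1,\ldots,b_m\}$ be any $\Z$-basis of $\lattice(\cycles(G))$. By Theorem \ref{thm:simple-basis}, the lattice is full-rank in $\Z^E$ with determinant $2^{n-1}$, so the $m \times m$ matrix $B$ having the $b_i$ as columns satisfies $|\det B| = 2^{n-1}$. Applying the ring map $\Z \to K$ entrywise produces a matrix over $K$ whose determinant is $\pm 2^{n-1}$, nonzero because $p$ is odd. The reduced columns are therefore $K$-linearly independent, each lies in $\linhull_K(\cycles(G))$ (as the reduction of a $\Z$-combination of cycle indicators), and there are $m$ of them, so they form a basis of the $m$-dimensional space.

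For the basis claim when $p = 2$: let $\{\indVect{C_1},\ldots,\indVect{C_{m-n+1}}\}$ be a basis of the classical binary cycle space $\linhull_{\Z/2\Z}(\cycles(G))$, which has dimension $m-n+1$. Viewing these $0/1$ vectors in $K^E$ via the natural inclusion $\Z/2\Z \hookrightarrow K$, linear independence is preserved because the rank of a matrix with entries in a subfield is unchanged under field extension. For the span, every cycle indicator $\indVect{C}$ is a $\Z/2\Z$-combination of the $\indVect{C_i}$ in $(\Z/2\Z)^E$; this identity transports componentwise to $K^E$, so the $\indVect{C_i}$ span every $\indVect{C}$ and hence all of $\linhull_K(\cycles(G))$. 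The substantive content is carried entirely by Theorem \ref{thm:lin-hull-groups}; the remaining arguments are routine bookkeeping tracing $\Z$-bases and $\Z/2\Z$-bases through the ring maps $\Z \to K$ and $\Z/2\Z \hookrightarrow K$, and the only real care needed is keeping straight that reduction of a lattice basis is a basis precisely when the determinant $2^{n-1}$ survives in $K$.
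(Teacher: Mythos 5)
Your proof is correct and follows essentially the same strategy as the paper: specialize Theorem \ref{thm:lin-hull-groups} to $A = K$ to get the dimension, then verify the two basis statements by case on $p$. The only minor difference is in the verification: you check an \emph{arbitrary} lattice basis directly via the determinant $\pm 2^{n-1}$ surviving in $K$ (and an arbitrary binary cycle basis via rank preservation under field extension), whereas the paper first pins down the specific basis of Proposition \ref{prop:simple-basis} and then passes to general bases by invertible change-of-basis; both routes are sound and of comparable length.
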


The remainder of the paper is organized as follows.  In Section~\ref{sec:preliminaries} we give a brief overview of relevant background material and prior results in graph theory, and discuss preliminary computational results and our computational model.
In Section~\ref{subsec:lattice-basics} we study the basic structure of the cycle lattice and derive Theorem~\ref{thm:simple-basis},
and in Sections~\ref{subsec:semi-fund-basis-alg} and \ref{subsec:top-ext-basis-alg}
we present two approaches for producing lattice cycle bases of graphs, in particular proving Theorems~\ref{thm:semi-fund-basis}, \ref{thm:top-ext-seq-alg} and \ref{thm:top-ext-comp-chain}.
In Section~\ref{sec:applications-lin-hulls} we summarize several consequences of our results for linear hulls of cycles with respect to fields and Abelian groups, and we give proofs of Theorems~\ref{thm:lin-hull-groups} and \ref{thm:lin-hull-vector-sp}.

\textbf{Acknowledgements:} We thank Prof.\ Andras Frank for his comments about the algorithmic aspects of this problem.  We acknowledge Mr.\ Yuanbo Li for his work, during a Summer REU, on an earlier version of this paper regarding the relationship of the lattice of cycles and the Tutte decomposition.  We thank the reviewers and editors for their help and feedback.  The first author is supported by the DFG (German Research Foundation) within the project number 413995221.  The second author was partially supported by NSF grant DMS-1802986.  The third author gratefully acknowledges support from NSF grant DMS-1818969.

\section{Preliminaries}
\label{sec:preliminaries}

In the following, we introduce preliminary material which will be needed throughout the remainder of the work.  First we give a brief overview of the ideas in graph theory that will be assumed as background.
After this, we discuss the computational model that will be used for our algorithmic assertions, and we develop a computational reduction, Lemma \ref{lem:3-edge-conn-reduction}, which will be essential for our analysis in Section \ref{sec:cycle-lattice}.

\subsection{Graph Theory}

We briefly provide the graph theory concepts necessary to understand our results.  For standard background from graph theory, including basic definitions and notations, we refer to \cite{diestel_graph_2017}.

Here and throughout this work, graphs are allowed to have loops and parallel edges.  Let $G = (V, E)$ be an undirected graph with vertices $V$ and edges $E$.  If $v$ is a vertex of $G$, then its degree $\deg(v)$ is the number of non-loop edges incident to $v$ plus twice the number of loops incident to $v$.  A (simple) path $P$ of length $k$ in $G$ is a subgraph of $G$ with distinct vertices $\{x_0, x_1, \ldots, x_k\}$ and edges $\{x_0 x_1, x_1 x_2, \ldots, x_{k-1} x_k\}$.
A (simple) \defn{cycle} of length $k+1$ in $G$ is a subgraph of $G$ consisting of a simple path along with an additional edge connecting its endpoints.

If $e \in E$, then the \defn{deletion} of $e$ from $G$ is the graph $G \setminus e$ obtained from $G$ by removing the edge $E$, and the \defn{contraction} of $e$ in $G$ is the graph $G / e$ obtained from $G$ by combining its endpoints into a single vertex, and removing $e$ from the result.  The cycles of $G \setminus e$ are exactly the cycles of $G$ which do not contain $e$, and the cycles of $G / e$ are the inclusion-minimal nonempty subgraphs within the set of graphs $\set{C / e}{C \text{ a cycle of } G}$.

If $E_1, E_2 \subseteq E$ are disjoint sets of edges, then a graph may be obtained by deleting the edges of $E_1$ and contracting the edges of $E_2$ in any order.  The resulting graph is independent of the order chosen, and is denoted $G \setminus E_1 / E_2$.  A graph which can be obtained in this way from $G$ is called a \defn{minor} of $G$.

An edge $e \in E$ is called a \defn{bridge} if $e$ is contained in no cycle of $G$, and edges $e, f \in E$ are said to be in \defn{series} if $e \in C$ implies $f \in C$ for every cycle $C$.  The relation of being in series is an equivalence relation on $E$ whose equivalence classes are called the \defn{series classes} of $G$.  A series class is called \defn{non-trivial} if it has more than one element, and \defn{trivial} otherwise.

The graph $G$ is \defn{connected} if it contains a path between any two vertices, and it is \defn{$\boldsymbol{k}$-edge-connected} if $G \setminus E_1$ is connected for any set $E_1$ of $k-1$ or fewer edges.  Most importantly for our purposes, a connected graph is 2-edge-connected if and only if it has no bridges, and is 3-edge-connected if and only if it has no bridges and no nontrivial series classes.

Edge connectivity of a graph is related to the following version of the well-known \emph{Menger's theorem} (see for instance \cite[Sec.~3.3]{diestel_graph_2017}), which is critical to our proofs.

\begin{proposition}[Menger's theorem, edge version]
  Let $G$ be an undirected graph and let $u, v \in V(G)$ be distinct vertices.  Then the minimum number of edges which can be deleted from $G$ to disconnect $u$ and $v$ is equal to the maximum number of edge-disjoint paths connecting $u$ and $v$.
\end{proposition}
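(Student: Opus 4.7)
The plan is to deduce Menger's theorem from the integral max-flow min-cut theorem applied to an auxiliary digraph. Write $\kappa(u,v)$ for the minimum edge-disconnecting set size and $\lambda(u,v)$ for the maximum number of edge-disjoint $u$-$v$ paths. The easy inequality $\lambda(u,v) \le \kappa(u,v)$ is immediate: if $P_1,\dots,P_k$ are edge-disjoint $u$-$v$ paths and $F \subseteq E$ is any set whose removal disconnects $u$ from $v$, then because the paths are pairwise edge-disjoint $F$ must contain at least one edge from each $P_i$, hence $|F| \ge k$; optimizing both sides gives the bound.

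For the harder direction $\kappa(u,v) \le \lambda(u,v)$, I would form a directed network $D$ by replacing each undirected edge $\{x,y\}$ of $G$ with two antiparallel arcs $(x,y)$ and $(y,x)$, each of capacity $1$, and designate $u$ as source and $v$ as sink. The integral max-flow min-cut theorem produces an integer-valued $u$-$v$ flow $f$ in $D$ of value $c$ equal to the minimum $u$-$v$ cut capacity in $D$.

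I now make two translations between $D$ and $G$. First, a min cut in $D$: any partition $(S, V\setminus S)$ with $u \in S$, $v \notin S$ has forward capacity in $D$ equal to the number of arcs from $S$ to $V\setminus S$, which equals the number of undirected edges of $G$ with one endpoint in each part. Deleting precisely those undirected edges from $G$ disconnects $u$ from $v$, so $\kappa(u,v) \le c$. Second, a max flow in $D$: I first normalize $f$ by cancelling flow on any pair of antiparallel arcs that both carry one unit (subtracting $1$ from each preserves the flow value and reduces total support), so that in the normalized flow each undirected edge of $G$ carries flow in at most one direction. A standard integral flow decomposition writes the normalized flow as a sum of $c$ unit $u$-$v$ dipaths plus dicycles; discarding the cycles and projecting the dipaths to $G$ gives $c$ edge-disjoint $u$-$v$ walks, which can be shortened to $c$ edge-disjoint simple $u$-$v$ paths. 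Hence $\lambda(u,v) \ge c$, and chaining $\kappa(u,v) \le c \le \lambda(u,v) \le \kappa(u,v)$ closes the argument.

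The only subtle step is the antiparallel-cancellation lemma, which is where one must use integrality to avoid double-counting an undirected edge as belonging to two different paths. I would expect this to be the main obstacle, but it is standard and local: pairs of opposing arcs with positive flow contribute a directed cycle of length two that can be removed without changing the $u$-$v$ flow value, after which ordinary integral flow decomposition and the projection $D \to G$ behave cleanly. Everything else is a direct appeal to integral max-flow min-cut.
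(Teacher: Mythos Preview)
Your argument is correct and is one of the standard routes to the edge version of Menger's theorem. The antiparallel-cancellation step is handled properly: once each undirected edge carries flow in at most one direction, the $0/1$ flow decomposes into $c$ arc-disjoint $u$--$v$ dipaths, and these project to edge-disjoint undirected paths.

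Note, however, that the paper does not give its own proof of this proposition. It is stated in the preliminaries as a background fact and attributed to \cite[Sec.~3.3]{diestel_graph_2017}. So there is no in-paper proof to compare against; your max-flow/min-cut derivation is simply a valid independent proof of a result the paper takes as given.
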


In particular, if $G$ is $k$-edge-connected and $u, v \in V$, then there exist $k$ edge-disjoint paths connecting $u$ and $v$.

Suppose $G$ is connected, and let $T \subseteq E$ be a spanning tree of $G$.  Recall that for each edge $e \in E \setminus T$, there is a unique cycle contained in the edge set $T \union e$, which is called the \defn{fundamental cycle} of $e$ with respect to $T$ and is denoted $\ci(e, T)$.
For each edge $t \in T$, the forest $T \setminus t$ has two connected components, which induces a cut of $G$ between the corresponding vertex sets.  This cut is called the \defn{fundamental cut} or \defn{fundamental bond} of $t$ with respect to $T$, and is denoted $\bo(t, T)$.
Fundamental cycles and fundamental cuts exhibit the following duality (see e.g., \cite[Lem.~7.3.1]{bjorner_homology_1992}).
\begin{lemma}
  \label{lem:ci-bo-duality}
  Let $G = (V, E)$ be a connected graph and let $T \subseteq E$ be a spanning tree.  If $t \in T$ and $e \in E \setminus T$, then
  \[
    e \in \bo(t, T) \text{ if and only if } t \in \ci(e, T).
  \]
\end{lemma}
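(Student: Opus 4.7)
The plan is to translate both sides of the biconditional into the same concrete statement about the unique $u$--$v$ path in $T$, where $u, v$ are the endpoints of the non-tree edge $e$. First I would unpack $\ci(e, T)$: since $T$ is a spanning tree, there is a unique path $P_T(u, v) \subseteq T$ between $u$ and $v$, and adding $e$ to this path gives the unique cycle in $T \cup e$. Hence $t \in \ci(e, T)$ if and only if $t$ lies on $P_T(u, v)$.

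Next I would unpack $\bo(t, T)$. Removing $t$ from the tree $T$ yields exactly two subtrees, whose vertex sets $A$ and $B$ partition $V$. By definition, $\bo(t, T)$ consists of all edges of $G$ with one endpoint in $A$ and one endpoint in $B$, so $e \in \bo(t, T)$ is equivalent to $u$ and $v$ lying in different components of $T \setminus t$.

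The heart of the argument is the elementary tree fact: an edge $t \in T$ lies on $P_T(u, v)$ if and only if $u$ and $v$ belong to different components of $T \setminus t$. For the forward direction, the path $P_T(u, v)$ crosses from one component of $T \setminus t$ to the other precisely by traversing $t$, so its endpoints lie on opposite sides. For the converse, if $u$ and $v$ were in the same component of $T \setminus t$, there would be a $u$--$v$ path in $T$ avoiding $t$; by uniqueness of paths in a tree this must coincide with $P_T(u, v)$, so $t \notin P_T(u, v)$. Chaining the two reformulations above with this equivalence yields the lemma.

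I do not expect any real obstacle; the argument is essentially an unfolding of definitions together with the basic uniqueness-of-path property of trees. The only thing to be careful about is handling the partition induced by $T \setminus t$ cleanly, which is straightforward once one notes that a tree on $n$ vertices with one edge removed becomes a forest with exactly two components.
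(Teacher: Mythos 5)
The paper does not prove this lemma; it cites it as a known fact (from Bj\"orner's chapter on matroid homology). Your argument is the standard one and is correct: reduce both $t \in \ci(e,T)$ and $e \in \bo(t,T)$ to the same statement about whether the endpoints $u,v$ of $e$ lie in different components of $T \setminus t$, using uniqueness of tree paths in both directions. No gaps.
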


The well-studied \defn{binary cycle space} of $G$ is defined as $\lattice(\cycles(G)) \tensor (\Z / 2 \Z)$, and can be thought of as the vector subspace of $(\Z / 2 \Z)^E$ spanned by the indicator vectors of $\cycles(G)$.  It is known that the collection of all fundamental cycles with respect to a fixed spanning tree of $G$ gives a basis of this space; the following result summarizes this and other related properties.

\begin{proposition}[{\cite[Sec.~1.9]{diestel_graph_2017}}]
  \label{prop:classical-cycle-space}
  Let $G = (V, E)$ be a connected graph with binary cycle space $\script{B}$.  Then:
  \begin{itemize}
    \item $\script{B}$ is the collection of characteristic vectors of Eulerian subgraphs of $G$.

    \item If $T \subseteq E$ is a spanning tree of $G$, then $\set{\indVect{\ci(e,T)}}{e \in E \setminus T}$ is a basis of $\script{B}$.

    \item The dimension of $\script{B}$ is $\card{E} - \card{V} + 1$.
  \end{itemize}
\end{proposition}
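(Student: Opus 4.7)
The plan is to prove the three bullets in order, with the Eulerian characterization feeding into the basis argument, which in turn yields the dimension. The whole thing is essentially elementary linear algebra over $\Z/2\Z$ combined with tree/forest combinatorics; the only non-formal ingredient is the standard fact that a nonempty even subgraph must contain a cycle.

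For the first bullet, I would show that $\script{B}$ equals the set of indicator vectors of subgraphs of $G$ in which every vertex has even degree. The inclusion $\script{B} \subseteq \{\text{even subgraphs}\}$ is clear: each cycle is itself an even subgraph, and the $\Z/2\Z$-sum of two even subgraphs corresponds to the symmetric difference of their edge sets, which preserves the parity of every vertex degree (since at each vertex the number of flipped incidences is even). For the reverse inclusion I would take an arbitrary nonempty even subgraph $H$, pick a vertex $v$ of positive degree in $H$, and walk along previously unused edges of $H$ starting from $v$; since every visited vertex has even degree in $H$, an unused outgoing edge always remains until a vertex is revisited, at which point one carves out a simple cycle $C \subseteq H$. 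Then $H \symdiff C$ is again even with strictly fewer edges, so induction writes $\indVect{H}$ as a $\Z/2\Z$-sum of cycle indicator vectors.

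For the fundamental-cycle basis, linear independence is immediate: for each $e \in E \setminus T$ the cycle $\ci(e,T)$ contains $e$ while no other fundamental cycle does, so the submatrix of fundamental-cycle indicator vectors on the coordinates indexed by $E \setminus T$ is an identity. For spanning, given $\indVect{H} \in \script{B}$ I would form
\[
  \indVect{H} + \sum_{e \in H \setminus T} \indVect{\ci(e,T)} \in (\Z/2\Z)^E,
\]
which still lies in $\script{B}$ by the first bullet and vanishes on every coordinate in $E \setminus T$, hence is supported inside the tree $T$; but a forest contains no nonempty even subgraph, so this element is zero, yielding the required expansion. The third bullet follows at once: $\dim_{\Z/2\Z} \script{B} = |E \setminus T| = |E| - (|V|-1) = |E| - |V| + 1$.

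The main obstacle, such as it is, is the cycle-extraction step in the first bullet: one must argue carefully that the walk-until-you-repeat procedure always succeeds, which hinges on the observation that each time the walk enters a vertex it has consumed an odd number of that vertex's incidences in $H$, so by evenness an unused edge must remain to continue. All remaining work is routine bookkeeping with indicator vectors and the standard fact that a spanning tree on $|V|$ vertices has exactly $|V|-1$ edges.
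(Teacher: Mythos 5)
Your proof is correct. The paper does not actually prove this proposition; it cites it directly from Diestel (Sec.~1.9), and your argument is precisely the standard textbook derivation: characterize $\script{B}$ as the even (Eulerian) subgraphs via symmetric-difference closure and greedy cycle extraction, then use the non-tree edges to reduce any element of $\script{B}$ to a vector supported in the tree, which must vanish because forests have no nonempty even subgraphs. One small remark worth keeping in mind given the paper's conventions: since $G$ is allowed to have loops and parallel edges, the cycle-extraction walk should be read with the understanding that a loop counts twice toward degree and is itself a cycle, and a pair of parallel edges forms a 2-cycle; your parity argument still goes through, but it is worth stating explicitly that the walk halts at the first repeated vertex so that the extracted closed walk is a simple cycle even in the presence of multi-edges.
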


\subsection{Computational Model and Graph Reductions}

The structure of graph cycle lattices can in many cases be reduced to the case of 3-edge-connected graphs.  In the following we give details of this reduction, and describe the computational model we use for algorithmic complexity bounds.
The key result connecting cycle lattices of 3-edge-connected graphs with those of general graphs is found in Lemma \ref{lem:3-edge-conn-reduction}.

Let $G = (V, E)$ be a graph with $m$ edges and $n$ vertices.  We will assume that $V$ is a totally ordered set, and that $G$ is represented as an ordered adjacency list.  In particular, inspecting the edges of $G$ adjacent to a vertex $v$ takes time $O(\deg(v))$, and inspecting all of the edges of $G$ takes time $O(m)$.  If $E_1, E_2 \subseteq E$ are disjoint, then the graph minor $G \setminus E_1 / E_2$ can be computed in time $O(mn)$ by deleting all of the edges of $E_1 \union E_2$ from $G$ and merging the remaining adjacencies of vertices connected by a path in $E_2$.

\begin{lemma}
  \label{lem:fund-cycles-matrix}
  Let $G = (V, E)$ be a connected graph with $m$ edges and $n$ vertices, and let $T$ be a spanning tree of $G$.  Then the fundamental cycles and fundamental cuts of $T$ can be computed in time $O(mn)$.
\end{lemma}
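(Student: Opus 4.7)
The plan is to root the spanning tree once and then enumerate fundamental cycles by tree-path computations; fundamental cuts will be produced as a byproduct using the duality of Lemma \ref{lem:ci-bo-duality}.

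First, I would pick an arbitrary vertex $r \in V$ and root $T$ at $r$, computing parent pointers and depths for all vertices via a single BFS/DFS pass in time $O(n)$. With this preprocessing in place, the unique path in $T$ between any two vertices $u, v$ can be extracted by walking up from the deeper of $u, v$ until the depths match, then walking up in lockstep from both sides until a common ancestor is reached, which takes $O(n)$ worst-case time per query.

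Next, I would iterate over the $m - n + 1$ non-tree edges $e = uv \in E \setminus T$. For each such $e$ I compute the $u$-to-$v$ path in $T$ as above; the fundamental cycle $\ci(e, T)$ is then the union of this path with $\{e\}$. During the same traversal, for every tree edge $t$ encountered on the path I append $e$ to a list associated to $t$; by Lemma \ref{lem:ci-bo-duality} this list is precisely $\bo(t, T) \setminus \{t\}$, so inserting $t$ itself at the end yields $\bo(t, T)$. The total cost of this phase is $O((m-n+1) \cdot n) = O(mn)$, dominating the $O(n)$ preprocessing.

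There is no real obstacle, only a sanity check on the output size: each fundamental cycle has at most $n$ edges and there are $m-n+1$ of them, so the combined description of the fundamental cycles has size $O(mn)$, matching the time bound; the fundamental cuts have the same aggregate size by the duality. Thus the algorithm attains the claimed $O(mn)$ bound, and the lemma follows.
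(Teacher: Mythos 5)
Your proposal is correct and takes essentially the same approach as the paper: both root the tree, compute for each non-tree edge the unique tree path between its endpoints in $O(n)$ time (the paper by precomputing root-to-vertex paths and finding the divergence point, you by walking parent pointers to the least common ancestor), and both invoke Lemma~\ref{lem:ci-bo-duality} to read off the fundamental cuts from the same traversal. The differences are purely implementation-level and do not change the argument or the $O(mn)$ bound.
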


\begin{proof}
  We will record the fundamental cycles of $T$ by computing the $T \times (E \setminus T)$ binary matrix $X$ with values
  \[
    X_{t,e} = \begin{cases}
      1, & t \in \ci(e, T) \\
      0, & \text{otherwise}
    \end{cases}.
  \]
  This simultaneously computes the fundamental cycles and the fundamental cuts of $T$ because $t \in \ci(e, T)$ if and only if $e \in \bo(t, T)$ by Lemma \ref{lem:ci-bo-duality}.

  The algorithm proceeds as follows.  Initialize $X$ to all zeros, and pick an arbitrary vertex $v_0$ of $T$.  Traverse $T$ to compute for each vertex $v \in V$ the path $P_v$ in $T$ from $v_0$ to $v$.  For each edge $e \in E \setminus T$, let $v, v'$ be its endpoints.  Find the first edge at which $P_v$ and $P_{v'}$ differ, and set $X_{t,e}$ to $1$ for the subsequent edges of these paths.

  The elements of $T$ in a fundamental circuit $\ci(e, T)$ are given by the unique path in $T$ between the endpoints of $e$, so we see that the edges recorded in this way represent the fundamental circuits of $T$ as desired.  The computational time $O(mn)$ follows because each path $P_v$ has at most $n - 1$ edges, and the number of edges $e \in E \setminus T$ is bounded by $m$.
\end{proof}

\begin{lemma}
  \label{lem:bridges-series-classes}
  Let $G = (V, E)$ be a connected graph with $m$ edges and $n$ vertices.  Then the bridge elements and series classes of $G$ can be computed in time $O(mn)$.
\end{lemma}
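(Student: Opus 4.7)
The plan is to obtain bridges and series classes by post-processing the fundamental-cycles matrix $X \in \{0,1\}^{T \times (E \setminus T)}$ for a spanning tree $T$ of $G$, where $X_{t,e} = 1$ iff $t \in \ci(e,T)$ (equivalently iff $e \in \bo(t,T)$ by Lemma~\ref{lem:ci-bo-duality}). First construct a spanning tree $T$ via DFS in $O(m+n)$ time and compute $X$ in $O(mn)$ time by Lemma~\ref{lem:fund-cycles-matrix}. Since each non-tree edge is contained in its own fundamental cycle, bridges must be tree edges, and specifically they are those tree edges $t$ with $\bo(t,T) = \{t\}$, i.e.\ those with an all-zero row in $X$; a single scan locates them in $O(mn)$ time.

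The key for the series classes is the binary cycle basis identity from Proposition~\ref{prop:classical-cycle-space}: every cycle $C$ of $G$ satisfies $\indVect{C} \equiv \sum_{e \in C \setminus T} \indVect{\ci(e,T)} \pmod{2}$ in $\mathbb{F}_2^E$, so for any edge $g \in E$ and any cycle $C$,
\[
  \mathbb{1}[g \in C] \equiv \sum_{e \in C \setminus T} \mathbb{1}[g \in \ci(e,T)] \pmod{2}.
\]
Applied with $g$ ranging over the edges of interest and $C$ specialized to the fundamental cycles $\ci(f,T)$, this yields a three-case classification. First, two tree edges $t, t' \in T$ are in series iff the rows $X_t$ and $X_{t'}$ coincide. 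Second, a tree edge $t$ and non-tree edge $e$ are in series iff row $X_t$ contains a single $1$, in column $e$, i.e.\ $\bo(t,T) = \{t, e\}$. Third, two distinct non-tree edges are never in series, since $\ci(e,T)$ contains $e$ but not $e'$; loops likewise form singleton classes, being the sole inhabitants of their own cycles.

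With this characterization in hand, the algorithm sorts the $n-1$ rows of $X$ lexicographically via radix sort in $O(mn)$ time, then groups tree edges with identical rows; for any group whose common row has a single $1$ in column $e$, the non-tree edge $e$ is adjoined to that group. All remaining non-tree edges (including loops) form singleton classes. The total running time is $O(mn)$, dominated by the computation of $X$ and the radix sort. The main technical obstacle is verifying the three-case characterization of series equivalence, which follows directly by applying the parity identity above with $C$ a fundamental cycle and $g$ one of the two edges being compared; after that, the running time bound is routine bookkeeping.
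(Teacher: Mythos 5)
Your proof is correct and follows essentially the same approach as the paper. Both begin from the fundamental cycle/cut matrix $X$ of Lemma~\ref{lem:fund-cycles-matrix}, read off bridges as its all-zero rows, and determine series classes by agreement across fundamental cycles; the paper phrases this last step as computing the common refinement of the bipartitions $\{C,\,E\setminus C\}$ over fundamental cycles $C$, and your explicit three-case classification via rows of $X$, carried out with a radix sort, is a concrete implementation of exactly that refinement in the same $O(mn)$ bound. One small wording fix: you state that the characterization ``follows directly by applying the parity identity above with $C$ a fundamental cycle,'' but the nontrivial direction (equal rows of $X$ implies equal membership in \emph{every} cycle) requires applying the displayed parity identity with $C$ an arbitrary cycle; the identity as you wrote it already covers this, so only the summary sentence needs adjusting.
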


\begin{proof}
  Let $T$ be a spanning tree of $G$, and let $X$ be the $T \times (E \setminus T)$ fundamental cycle matrix from Lemma \ref{lem:fund-cycles-matrix}, which can be computed in time $O(mn)$.  The bridge elements of $G$ are those which appear in no fundamental cycle of $T$, and so can be identified as the indices of the all-zero rows of $X$.

  For the series classes, note that if $x, y \in E$ are in different series classes, then there are elements of the binary cycle space for which the $x$ and $y$ coordinates differ.  In particular, since the fundamental cycles of $T$ generate the binary cycle space, this implies that there is a \emph{fundamental} cycle of $T$ containing one of $x, y$ but not the other.  Thus to compute the series classes of $G$, it is sufficient to compute the partition of $E$ corresponding to the common refinement of the partitions $\{C, E \setminus C\}$ where $C$ is a fundamental cycle with respect to $T$.  Because each cycle $C$ has length at most $n$ and there are $m - n + 1 = O(m)$ such cycles, this refinement can be computed in time $O(mn)$.
\end{proof}

\begin{lemma}
  \label{lem:series-contractions}
  Let $G = (V, E)$ be a connected graph, and let $e \in E$.  Then:
  \begin{enumerate}
    \item \label{p:series-contraction} If $e$ is contained in a non-trivial series class of $G$, then $\cycles(G)$ and $\cycles(G / e)$ are in bijection by the map $C \mapsto C / e$.
    \item \label{p:bridge-deletion} If $e$ is a bridge of $G$, then $\cycles(G) = \cycles(G \setminus e)$.
  \end{enumerate}
  Let $\pi : \Z^E \to \Z^{E \setminus e}$ be the standard projection map.  In both of the cases above, $\pi$ induces a lattice isomorphism between the cycle lattice of $G$ and the cycle lattice of the corresponding graph minor.
\end{lemma}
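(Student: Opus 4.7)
The plan is to dispatch part (2) immediately, then establish the structural content of part (1), after which both lattice statements follow cleanly. For part (2), if $e$ is a bridge then $e$ belongs to no cycle of $G$, so $\cycles(G) = \cycles(G \setminus e)$ tautologically. Moreover every generator $\indVect{C}$ of $\lattice(\cycles(G))$ has a zero in the $e$-coordinate, so $\pi$ restricted to the cycle lattice is the identity on the supporting coordinates, hence a lattice isomorphism onto $\lattice(\cycles(G \setminus e))$.

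For part (1), let $f \ne e$ be an edge series-equivalent to $e$, which exists because the series class is non-trivial. The central claim, where the series-class hypothesis does all its work, is: no cycle $C$ avoiding $e$ can contain both endpoints $u, v$ of $e$. For otherwise $C$ would decompose into two edge-disjoint $u$-$v$ paths $P_1, P_2$ in $G \setminus e$, producing cycles $C_i \defeq P_i \union \{e\}$ each containing $e$. Since $e$ and $f$ are in series, $f$ lies in both $C_1$ and $C_2$, forcing $f \in P_1 \intersect P_2$, which contradicts their edge-disjointness. With this claim the assignment $C \mapsto C/e$ is well-defined: if $e \in C$, contraction produces a shorter cycle in $G/e$; if $e \notin C$, the merger of $u, v$ does not affect $C$, which survives intact as a cycle of $G/e$.

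Bijectivity of $C \mapsto C/e$ then follows by inverting on cases: each $C' \in \cycles(G/e)$ pulls back either to $C'$ itself viewed in $G$ (when $C'$ avoids the merged vertex $w$, or when $C'$ passes through $w$ via two edges originally incident to the same endpoint of $e$) or to $C' \union \{e\}$ (when the two edges at $w$ come from opposite endpoints of $e$); these preimages are visibly distinct cycles of $G$, and a direct case-check also gives injectivity of the forward map. Finally, in both parts of the lemma $\pi(\indVect{C}) = \indVect{C/e}$ (with $C/e = C$ when $e \notin C$), so $\pi$ carries the generating sets bijectively and induces a surjection of lattices. For injectivity on the lattice in case (1), observe that any $z = \sum_C n_C \indVect{C} \in \lattice(\cycles(G))$ satisfies $z_e = \sum_{C \ni e} n_C = \sum_{C \ni f} n_C = z_f$, because cycles contain $e$ iff they contain $f$; thus the $e$-coordinate is already determined by $\pi(z)$. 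The main obstacle is the structural claim about cycles avoiding $e$; everything else is bookkeeping on either side of it.
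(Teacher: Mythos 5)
Your proof is correct, and at a high level it follows the same route as the paper's: establish the bijection $C \mapsto C/e$ on cycles, then note that $\pi$ carries cycle indicator vectors to cycle indicator vectors. The paper's own argument, however, is a one-sentence sketch that cites the general description of $\cycles(G/e)$ as the inclusion-minimal nonempty members of $\{C/e : C \in \cycles(G)\}$, without explaining why the non-trivial series hypothesis upgrades this to a bijection, and it asserts the lattice isomorphism without verifying that $\pi$ restricted to $\lattice(\cycles(G))$ is injective. You supply precisely the two missing pieces. The structural claim---that no cycle avoiding $e$ can contain both endpoints $u,v$ of $e$, proved by splitting such a cycle into edge-disjoint $u$--$v$ paths $P_1,P_2$ and observing that a series partner $f$ would have to lie in both $P_i \cup \{e\}$---is exactly what guarantees each $C/e$ is a genuine cycle rather than a figure-eight at the merged vertex. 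And the identity $z_e = z_f$ for every $z \in \lattice(\cycles(G))$ cleanly shows $\ker\pi$ meets the cycle lattice trivially, giving injectivity. One minor point worth stating explicitly: the hypothesis rules out $e$ being a loop (a loop is itself a one-edge cycle, so its series class is necessarily trivial), which is what makes $u \ne v$ and hence the path decomposition legitimate.
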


\begin{proof}
  Part \ref{p:series-contraction} follows because the cycles of $G / e$ are the nonempty subgraphs in $\set{C / e}{C \in \cycles(G)}$ which are minimal under inclusion of edge sets, and Part \ref{p:bridge-deletion} follows because no cycle contains a bridge, and the cycles of $G \setminus e$ are those of $G$ not containing $e$.
  The projection $\pi$ induces a lattice isomorphism in each case because it maps the indicator vector of a cycle in $G$ to the indicator vector of the corresponding cycle in $G / e$ or $G \setminus e$.
\end{proof}

If $G$ is a connected graph, let $\hat{G}$ denote a graph obtained from $G$ by deleting all bridges of $G$ and contracting all but one element from each nontrivial series class of edges in $G$.  The graph $\hat{G}$ is called a \defn{cosimplification} of $G$.  This induces a projection map $\pi : E(G) \to E(\hat{G}) \union \{\epsilon\}$, where $\epsilon$ is a formal symbol disjoint from $E(G)$, given by
\[
  \pi : e \mapsto
  \begin{cases}
    \epsilon, & e \text{ a bridge element} \\
    \hat{e}, & \text{otherwise}
  \end{cases},
\]
where $\hat{e}$ denotes the representative of the series class in $G$ of an edge $e$ in $\hat{G}$.  The connected components of $\hat{G}$ are in particular 3-edge-connected, and this gives a graph reduction which is useful in studying cycles and cycle bases.

\begin{lemma}
  \label{lem:3-edge-conn-reduction}
  Let $G = (V, E)$ be a connected graph with $m$ edges and $n$ vertices.  Then a cosimplification $\hat{G}$ can be computed in time $O(mn)$.  Let $G_1, \ldots, G_k$ be the connected components of $\hat{G}$, and let $\cycles_i$ be a lattice cycle basis of $G_i$ for each $i$.  Then a lattice cycle basis of $G$ is given by $\cycles = \Union_i \set{\pi^{-1} (C)}{C \in \cycles_i}$, which can be computed in time $O(mn)$.
\end{lemma}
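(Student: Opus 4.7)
The plan is to build the cosimplification $\hat{G}$ by iteratively applying the two graph operations of Lemma~\ref{lem:series-contractions}, and then to observe that the cycle lattice decomposes along the connected components of $\hat{G}$.

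First, I would compute the bridges and series classes of $G$ in time $O(mn)$ using Lemma~\ref{lem:bridges-series-classes}.  For each nontrivial series class, I would fix a representative edge to keep and mark the others for contraction; together with the bridges, this produces two disjoint sets $E_1$ (bridges) and $E_2$ (non-representatives of nontrivial series classes), and I set $\hat{G} \defeq G \setminus E_1 / E_2$.  Using the computational model described in the preliminaries, the minor $\hat{G}$ itself can be formed in $O(mn)$.  The components $G_1, \dots, G_k$ of $\hat{G}$ have no bridges and no nontrivial series classes (these properties are preserved by the successive deletions and series-contractions), hence each $G_i$ is 3-edge-connected.

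Next, for the lattice statement, I would apply part~\ref{p:bridge-deletion} of Lemma~\ref{lem:series-contractions} once for each bridge and part~\ref{p:series-contraction} once for each non-representative of a nontrivial series class; composing the resulting lattice isomorphisms gives that the standard projection $\pi \colon \Z^E \to \Z^{E(\hat{G})}$ induces an isomorphism $\lattice(\cycles(G)) \simeq \lattice(\cycles(\hat{G}))$.  Since $\hat{G}$ has no edges between distinct components and no cycle crosses components, $\lattice(\cycles(\hat{G}))$ splits as the direct sum $\bigoplus_i \lattice(\cycles(G_i))$ inside $\Z^{E(\hat{G})}$, so the disjoint union $\bigsqcup_i \cycles_i$ is a lattice cycle basis of $\hat{G}$.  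Lifting through $\pi^{-1}$, and verifying that the preimage of a cycle $C$ in some $G_i$ is again a cycle in $G$ (each edge of $G_i$ is expanded into the path formed by its series class, and bridges play no role), shows that $\cycles = \bigcup_i \{ \pi^{-1}(C) : C \in \cycles_i \}$ is a lattice cycle basis of $G$, which is the structural conclusion.

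Finally, I would address the $O(mn)$ computation of $\cycles$.  Each lifted element $\pi^{-1}(C)$ is a simple cycle in $G$, hence contains at most $n$ edges, and the total number of basis elements is $\sum_i |\cycles_i| \le |E(\hat{G})| \le m$ since each $G_i$ is 3-edge-connected and Theorem~\ref{thm:simple-basis} makes its cycle lattice full-dimensional.  Producing $\pi^{-1}(C)$ from $C$ amounts to replacing each edge of $C$ by the corresponding series path in $G$, which can be done in time proportional to the length of $\pi^{-1}(C)$ after a single $O(m)$ preprocessing to record the series paths, so the whole lifting phase fits in $O(mn)$.

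The main obstacle I expect is cleanly justifying the identification of cycles under the composite of all the minor operations: one must check that iterating Lemma~\ref{lem:series-contractions} really gives the claimed bijection between $\cycles(\hat{G})$ and the cycles of $G$ that use exactly one edge from each series class they meet and no bridges, and that $\pi^{-1}(C)$ is literally a cycle (not just a union of edges with the correct image).  Once that bookkeeping is in place, combining it with the component decomposition of $\lattice(\cycles(\hat{G}))$ yields the lemma.
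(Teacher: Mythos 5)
Your proposal is correct and follows essentially the same approach as the paper: compute bridges and series classes via Lemma~\ref{lem:bridges-series-classes}, form the minor $\hat{G}$, use Lemma~\ref{lem:series-contractions} to get the lattice isomorphism induced by the projection $\pi$, decompose the cycle lattice of $\hat{G}$ over its connected components, and lift the bases through $\pi^{-1}$. Your complexity argument (at most $m$ basis cycles, each lifted to a simple cycle of length at most $n$) is phrased slightly differently from the paper's (which counts $O(m)$ cycles times $O(n)$ nontrivial series classes to check per cycle) but both give $O(mn)$, and your extra care in flagging that one must verify $\pi^{-1}(C)$ is genuinely a cycle is a reasonable refinement of a point the paper treats implicitly.
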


\begin{proof}
  From Lemma \ref{lem:bridges-series-classes}, we can compute the bridges and series classes of $G$ in time $O(mn)$, from which the graph minor $\hat{G}$ can be constructed.

  Lemma \ref{lem:series-contractions} implies that $\pi$ induces a lattice isomorphism from $\lattice(\cycles(G))$ to $\lattice(\cycles(\hat{G}))$.  Since the cycle lattice of a graph admits a direct sum decomposition over connected components, the collection $\Union_i \cycles_i$ yields a lattice cycle basis of $\hat{G}$.  Thus, $\pi^{-1}$ lifts to a lattice cycle basis of $G$.

  The computation of $\pi^{-1}$ can be accomplished by checking each cycle for the presence of the series class representative for a non-trivial series class and extending, if the representative is found, to include the whole series class in $G$.
  There are $O(m)$ cycles in the basis of $\hat{G}$, and $O(n)$ nontrivial series classes, yielding a time bound of $O(mn)$ for the computation.
\end{proof}

\section{The Cycle Lattice of a Graph}
\label{sec:cycle-lattice}

We now develop results relating to the cycle lattice of a graph, with an emphasis on understanding the structure of and algorithms for producing lattice bases.
In Section \ref{subsec:lattice-basics}, we prove basic structural results of the cycle lattice, and produce a simple lattice basis extending the fundamental cycle basis of the classical binary cycle space.
In Section \ref{subsec:semi-fund-basis-alg} we present an algorithm to produce \emph{semi-fundamental} lattice cycle bases, and in Section \ref{subsec:top-ext-basis-alg} we present a different algorithm which sequentially expands lattice cycle bases for the graph minors in a \emph{topological extension sequence}.
Both algorithms will be seen to produce a lattice cycle basis in time $O(mn)$, where $m$ is the number of edges of the graph and $n$ is the number of vertices.

\subsection{Lattice Structure and a Non-cycle Basis}
\label{subsec:lattice-basics}

As a first step toward understanding the lattice structure, we make the following observation.

\begin{lemma}
  \label{lem:twice-edges-in-lattice} Let $G$ be a 3-edge-connected graph.
  For each $e \in E$, the vector $2 \indVect{e}$ is an element of $\lattice(\cycles(G))$.  In particular, $2 \Z^E \subseteq \lattice(\cycles(G))$.
\end{lemma}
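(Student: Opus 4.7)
The plan is to realize $2\indVect{e}$ as an explicit small integer combination of cycle indicator vectors by leveraging the $3$-edge-connectedness of $G$ through Menger's theorem. First, dispose of the case where $e$ is a loop: then $\{e\}$ is itself a cycle of $G$, so $\indVect{e}\in\lattice(\cycles(G))$ and in particular so is $2\indVect{e}$. Hence assume $e=uv$ with distinct endpoints $u\neq v$. Since $G$ is 3-edge-connected the minimum $u$-$v$ edge cut has size at least three, so Menger's theorem produces three pairwise edge-disjoint simple $u$-$v$ paths. I would take the single edge $e$ as one of them, and apply Menger's theorem in $G\setminus e$ (whose $u$-$v$ edge-connectivity is still at least two) to obtain two additional edge-disjoint simple $u$-$v$ paths $P_1$ and $P_2$ avoiding $e$.

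Next, form the three edge subsets
\[
  H_1 \defeq \{e\}\union P_1, \qquad H_2 \defeq \{e\}\union P_2, \qquad H_3 \defeq P_1\union P_2,
\]
viewed as subgraphs of $G$. In each $H_i$, the vertices $u$ and $v$ have degree two, and every other vertex appearing in $H_i$ has degree two or four, so each $H_i$ is an Eulerian subgraph of $G$. By the classical fact that any Eulerian subgraph edge-decomposes into simple cycles (traverse an Eulerian circuit of each connected component and peel off a cycle at every revisit of a vertex), each $\indVect{H_i}$ is a nonnegative integer sum of indicator vectors of cycles of $G$, and therefore lies in $\lattice(\cycles(G))$.

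Finally, since $e$, $P_1$, and $P_2$ are pairwise edge-disjoint, a direct cancellation gives
\[
  \indVect{H_1}+\indVect{H_2}-\indVect{H_3}
  = (\indVect{e}+\indVect{P_1}) + (\indVect{e}+\indVect{P_2}) - (\indVect{P_1}+\indVect{P_2})
  = 2\indVect{e},
\]
so $2\indVect{e}\in\lattice(\cycles(G))$. The ``in particular'' clause is then immediate, since the vectors $\{2\indVect{e}:e\in E\}$ generate $2\Z^E$ over $\Z$. I do not anticipate a serious obstacle: the argument reduces to a careful application of the edge version of Menger's theorem (already quoted in the preliminaries) together with the standard decomposition of Eulerian subgraphs into simple cycles, both of which are classical and available.
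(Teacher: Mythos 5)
Your proof is correct and follows essentially the same route as the paper: invoke Menger's theorem in $G\setminus e$ to obtain two edge-disjoint $u$-$v$ paths $P_1, P_2$, and write $2\indVect{e} = \indVect{P_1\union e} + \indVect{P_2\union e} - \indVect{P_1\union P_2}$ with the last term decomposed into disjoint cycles. The only cosmetic differences are that you handle the loop case explicitly (the paper folds it into a ``without loss of generality'') and you pass through the Eulerian-decomposition argument for all three subgraphs rather than observing directly that $P_i\union e$ is already a simple cycle.
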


\begin{proof}
  Let $e \in E$, and without loss of generality suppose $e$ connects distinct vertices $u$ and $v$.  Since $G$ is 3-edge-connected, any minimal cut in $G \setminus e$ disconnecting vertices $u$ and $v$ contains at least two edges.  By Menger's theorem, there are edge-disjoint simple paths $P$ and $Q$ between $u$ and $v$ which exclude the edge $e$.

  In particular, $P \union e$ and $Q \union e$ are cycles whose only common edge is $e$.  Additionally, $P \union Q$ is a (potentially non-simple) cycle of $G$, which can be written as a disjoint union of cycles, $P \union Q = C_1 \union \cdots \union C_k$.  From this we obtain that
  \[
    2 \indVect{e} = \indVect{P \union e} + \indVect{Q \union e} - \sum_i \indVect{C_i}
  \]
  is in $\lattice(\cycles(G))$.
\end{proof}

As a first corollary, we obtain a basis-free description of the cycle lattice, analogous to Proposition~\ref{prop:cycle-space-dimension}.

\begin{corollary}
  Let $G = (V, E)$ be a graph.  Then $\lattice(\cycles(G))$ is given by the collection of $p \in \Z^E$ such that
  \begin{itemize}
    \item $p_e = 0$ for any bridge $e$.
    \item $p_e = p_f$ for $e$ and $f$ in series.
    \item $\sum_{e \in N(v)} p_e$ is even for each vertex $v$, where $N(v)$ is the neighborhood of edges incident to $v$.
  \end{itemize}
\end{corollary}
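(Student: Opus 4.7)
The plan is to prove both inclusions and handle the two directions separately. The forward inclusion is immediate: every cycle indicator $\indVect{C}$ satisfies all three conditions (bridges are missed, series edges either both appear or both miss any cycle, and the Eulerian parity holds at every vertex since a cycle contributes $0$ or $2$ incidences to each vertex, interpreting loops as contributing $2$). Hence any integer combination of cycle indicators lies in the set described on the right.

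For the reverse inclusion, I would first reduce to the $3$-edge-connected case using Lemma~\ref{lem:3-edge-conn-reduction} and Lemma~\ref{lem:series-contractions}. Given $p \in \Z^E$ satisfying the three conditions, the first two conditions say precisely that $p$ is constant on series classes and vanishes on bridges, so $p$ descends under the projection $\pi$ to a well-defined vector $\hat p \in \Z^{E(\hat G)}$ on the cosimplification $\hat G$. I would check that $\hat p$ still satisfies the evenness condition at every vertex of $\hat G$: at internal vertices of a contracted series path the two incident-edge values coincide and sum to an even number automatically, while at each endpoint vertex of $\hat G$ the new incidence sum differs from an original vertex sum in $G$ by terms of the form $p_{e_i} + p_{e_{i+1}} = 2p_{e_i}$, preserving parity. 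Since the lattice isomorphism from Lemma~\ref{lem:series-contractions} identifies $\lattice(\cycles(G))$ with $\lattice(\cycles(\hat G))$ componentwise, and $\hat G$ is a disjoint union of $3$-edge-connected components, it suffices to prove the claim on each $3$-edge-connected component.

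For a $3$-edge-connected graph $G'$, Lemma~\ref{lem:twice-edges-in-lattice} gives $2 \Z^{E(G')} \subseteq \lattice(\cycles(G'))$, so it is enough to show that the reduction $\bar p \in (\Z/2\Z)^{E(G')}$ lies in the binary cycle space $\lattice(\cycles(G')) \otimes \Z/2\Z$. By Proposition~\ref{prop:classical-cycle-space}, the binary cycle space consists exactly of characteristic vectors of Eulerian subgraphs of $G'$, i.e., of those $q \in (\Z/2\Z)^{E(G')}$ whose support meets every vertex in an even number of edges. This is precisely the evenness condition satisfied by $\bar p$. Therefore $\bar p$ is a sum (mod $2$) of cycle indicators, say $\bar p = \sum_i \indVect{C_i} \pmod 2$, so $p - \sum_i \indVect{C_i} \in 2\Z^{E(G')} \subseteq \lattice(\cycles(G'))$, and hence $p \in \lattice(\cycles(G'))$.

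The only mildly delicate step is the verification that the evenness condition passes cleanly through the cosimplification reduction; this is where the structural fact that internal vertices of a non-trivial series class have degree $2$ in $G$ (possibly modulo loop subtleties, which are handled by the convention that loops contribute twice to the incidence sum) is used. Once that bookkeeping is done, the result reduces transparently to the well-known binary case via the containment $2\Z^E \subseteq \lattice(\cycles(G))$.
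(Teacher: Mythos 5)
Your proof is correct and follows essentially the same strategy as the paper: the forward inclusion is trivial, and the reverse inclusion proceeds by reducing modulo the subgraph of odd coordinates (using the Eulerian/binary cycle space characterization of Proposition~\ref{prop:classical-cycle-space}), invoking Lemma~\ref{lem:twice-edges-in-lattice} to get $2\Z^E$ inside the lattice of a 3-edge-connected graph, and handling the general case via the cosimplification of Lemmas~\ref{lem:series-contractions} and~\ref{lem:3-edge-conn-reduction}. The only difference is the order of the two reductions: you cosimplify first and then do the Eulerian decomposition, which forces the extra bookkeeping check that vertex parities are preserved under contraction of series edges; the paper instead subtracts the odd-support cycles from $p$ \emph{before} cosimplifying, so that the remaining vector $p'$ already has all even coordinates and the parity issue never arises. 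Both orderings are sound; the paper's order is marginally tidier since it sidesteps the degree-accounting at merged vertices, but your accounting (including the loop convention) is correct as stated.
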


\begin{proof}
  The indicator vector of any cycle of $G$ satisfies all of the above conditions, so this is likewise true for any element of the cycle lattice.  Now suppose $p \in \Z^E$ satisfies the conditions above, and let $E_1 = \set{e \in E}{p_e \text{ is odd}}$.  By the parity condition,
  \[
    0 \equiv \sum_{e \in N(v)} p_e \equiv \sum_{e \in N(v) \cap E_1} p_e \equiv | N(v) \cap E_1| \pmod{2}.
  \]
  Since each vertex $v$ is incident to an even number of edges of $E_1$, this implies that $E_1$ induces an Eulerian subgraph of $G$, which thus can be decomposed into a disjoint union of a collection $\cycles'$ of cycles.  Letting $p' = p - \sum_{C \in \cycles'} \indVect{C}$, we see that $p'$ has only even coordinates.

  Passing to a cosimplification $\hat{G}$ of $G$, the projection of $p'$ lies in the cycle lattice of $\hat{G}$ by Lemma \ref{lem:twice-edges-in-lattice}.
  Because $p'$ is zero on bridges and equal on all elements of a series class, the cycle decomposition of the projection lifts to a cycle decomposition of $p'$ in $G$.  Thus, $p'$ lies in the cycle lattice of $G$, and consequently so does $p$.
\end{proof}

The fact that $2 \Z^E$ is a sublattice of $\lattice(\cycles(G))$ for 3-edge-connected $G$ means that we can view the quotient $\lattice(\cycles(G)) / 2 \Z^E$ as a subspace of the binary vector space $(\Z / 2 \Z)^E$.  This allows us to directly compute the determinant of the lattice.

\begin{proposition}
  \label{prop:cycle-lattice-det}
  Let $G = (V, E)$ be a 3-edge-connected graph with cycle lattice $\script{L} = \lattice(\cycles(G))$.  Then $\det(\script{L}) = 2^{\card{V} - 1}$.
\end{proposition}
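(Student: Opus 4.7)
The plan is to compute the determinant as the index $[\Z^E : \script{L}]$ using the tower $2\Z^E \subseteq \script{L} \subseteq \Z^E$ from Lemma \ref{lem:twice-edges-in-lattice}. Since $G$ is 3-edge-connected, the discussion after Proposition~\ref{prop:cycle-space-dimension} tells us that $\script{L}$ is full-dimensional in $\Z^E$, so the determinant is indeed the finite index $[\Z^E : \script{L}]$.

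First I would apply the third isomorphism theorem for groups to write
\[
    [\Z^E : \script{L}] \cdot [\script{L} : 2\Z^E] = [\Z^E : 2\Z^E] = 2^{\card{E}}.
\]
Next, I would identify $\script{L} / 2\Z^E$ with its image inside $(\Z/2\Z)^E = \Z^E / 2\Z^E$. By the very definitions, this image is precisely the $\Z/2\Z$-span of the characteristic vectors of the cycles of $G$, which is the classical binary cycle space $\script{B}$ of $G$. By Proposition~\ref{prop:classical-cycle-space}, $\dim_{\Z/2\Z}(\script{B}) = \card{E} - \card{V} + 1$, and therefore $[\script{L} : 2\Z^E] = 2^{\card{E} - \card{V} + 1}$.

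Substituting into the displayed equation gives
\[
    \det(\script{L}) = [\Z^E : \script{L}] = \frac{2^{\card{E}}}{2^{\card{E} - \card{V} + 1}} = 2^{\card{V} - 1},
\]
as desired.

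There is no real obstacle here once Lemma~\ref{lem:twice-edges-in-lattice} and Proposition~\ref{prop:classical-cycle-space} are in hand; the only subtlety to double-check is that the canonical map $\script{L} / 2\Z^E \hookrightarrow (\Z/2\Z)^E$ hits exactly the binary cycle space, which is immediate because both are defined as the $\Z/2\Z$-span of the characteristic vectors $\{\indVect{C} : C \in \cycles(G)\}$.
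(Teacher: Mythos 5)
Your proof is correct and follows essentially the same route as the paper: both use the tower $2\Z^E \subseteq \script{L} \subseteq \Z^E$ (valid by Lemma~\ref{lem:twice-edges-in-lattice}), multiplicativity of the index to reduce $\det(\script{L}) = [\Z^E : \script{L}]$ to computing $[\script{L} : 2\Z^E]$, and then identify $\script{L}/2\Z^E$ with the binary cycle space inside $(\Z/2\Z)^E$ so that Proposition~\ref{prop:classical-cycle-space} supplies the dimension $\card{E}-\card{V}+1$. You spell out the identification of the quotient with the binary cycle space (and the multiplicativity of the index) slightly more explicitly than the paper does, but there is no substantive difference in the argument.
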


\begin{proof}
  The determinant of $\script{L}$ can be expressed in terms of group indices as $\gpIndex{\Z^E}{\script{L}}$, which implies
  \[
  \frac{2^{\card{E}}}{\det(\script{L})} = \gpIndex{\script{L}}{2 \Z^E} = 2^{\dim_{\Z/2\Z} \big( \script{L} / 2 \Z^E \big)}.
  \]
  Here, the first equality is by comparison of determinants, and the second is by interpreting the lattice quotient $\script{L} / 2 \Z^E$ as a vector subspace of $\Z^E / 2\Z^E$.  By Proposition \ref{prop:classical-cycle-space}, the space of cycles over $\Z / 2 \Z$ has dimension $\card{E} - \card{V} + 1$, from which we can compute $\det(\script{L})$ directly.
\end{proof}

With explicit formulas for the dimension and the determinant of $\lattice(\cycles(G))$, we are now able to produce an explicit lattice basis by producing an appropriate number of lattice vectors with the correct determinant.  This is accomplished by extending the collection of indicator vectors of fundamental cycles $\ci(e, T)$ with respect to a fixed spanning tree $T$, as follows.

\begin{proposition}
  \label{prop:simple-basis}
  If $G = (V, E)$ is a 3-edge-connected graph and $T$ is a spanning tree of $G$, let
  \[
    \cycles_T \defeq \set{\indVect{\ci(e, T)}}{e \in E \setminus T} \quad \text{and} \quad X_T \defeq \set{2 \indVect{t}}{t \in T}.
  \]
  Then the collection $\cycles_T \union X_T$ is a basis for $\lattice(\cycles(G))$.
\end{proposition}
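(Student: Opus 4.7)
The plan is to exhibit $\cycles_T \cup X_T$ as a sublattice of $\lattice(\cycles(G))$ whose determinant matches the one computed in Proposition~\ref{prop:cycle-lattice-det}, from which equality of the two lattices is forced.

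First, I would verify membership: each $\indVect{\ci(e,T)}$ lies in $\lattice(\cycles(G))$ because $\ci(e,T)$ is a genuine cycle of $G$, and each $2\indVect{t}$ lies in $\lattice(\cycles(G))$ by Lemma~\ref{lem:twice-edges-in-lattice}. The total number of vectors in $\cycles_T \cup X_T$ is $(|E|-|T|)+|T|=|E|$, matching the rank of the full-dimensional lattice, so one only needs to identify $\cycles_T \cup X_T$ as a basis of $\Z$-rank $|E|$ with the correct covolume.

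Next, I would form the $|E|\times|E|$ matrix $M$ whose rows are the vectors of $\cycles_T \cup X_T$, ordering the columns by putting the coordinates of $E \setminus T$ first and those of $T$ afterwards. The fundamental cycle $\ci(e,T)$ contains $e$ itself and then a set of tree edges, while $\ci(e,T)$ meets $E\setminus T$ only in $\{e\}$. Hence $M$ has the block form
\[
M=\begin{pmatrix} I_{E\setminus T} & A \\ 0 & 2\,I_T\end{pmatrix},
\]
where $A$ is the $(E\setminus T)\times T$ incidence matrix of fundamental cycles. Consequently $\det(M)=2^{|T|}=2^{|V|-1}$, so the rows of $M$ are linearly independent over $\Q$, and the lattice $\script{L}'$ they generate has determinant $2^{|V|-1}$.

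Finally, I would invoke Proposition~\ref{prop:cycle-lattice-det} to conclude that $\det(\script{L}')=\det(\lattice(\cycles(G)))$, and then use the standard fact that a full-rank sublattice of another lattice of the same determinant must coincide with it: since $\script{L}'\subseteq\lattice(\cycles(G))$ and $[\Z^E:\script{L}']=[\Z^E:\lattice(\cycles(G))]$, the two lattices are equal, so $\cycles_T\cup X_T$ is indeed a basis. There is no real obstacle here because the block-triangular structure of $M$ makes the determinant calculation immediate once Lemma~\ref{lem:twice-edges-in-lattice} and Proposition~\ref{prop:cycle-lattice-det} are in hand; the only conceptual step is recognizing that the determinant comparison suffices to upgrade containment to equality.
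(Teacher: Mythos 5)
Your argument is correct and is essentially identical to the paper's first proof of this proposition: both check membership via Lemma~\ref{lem:twice-edges-in-lattice}, exhibit the block-triangular matrix of the proposed basis vectors, compute the determinant as $2^{|V|-1}$, and conclude by comparison with Proposition~\ref{prop:cycle-lattice-det} that a full-rank sublattice with the same determinant must coincide with $\lattice(\cycles(G))$. The only difference is the cosmetic choice of working with rows versus columns and the ordering of $T$ versus $E\setminus T$.
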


We present two short proofs of this fact, one of which directly applies our computation for the determinant of $\lattice(\cycles(G))$, and a second of independent interest which uses the duality between fundamental cycles and fundamental cuts.

\begin{proof}[Proof 1]
  We have that $X_T \subseteq \lattice(\cycles(G))$ by Lemma \ref{lem:twice-edges-in-lattice}.  Note that the vectors in $\cycles_T \union X_T$ are naturally identified with the edges of $G$.  Then for a fixed ordering on $E$ for which the elements of $T$ come before the elements of $E \setminus T$, the matrix of column vectors of $\cycles_T \union X_T$ induced by this ordering on both the rows and columns can be seen to have the following block structure,
  \[
    \begin{bmatrix}
      2 I_{\card{T}} & A \\
      0 & I_{\card{E \setminus T}}
    \end{bmatrix},
  \]
 where $I_k$ is the $k \times k$ identity matrix, and $A$ is the matrix whose columns are the indicator vectors of $\ci(e, T) \setminus e$ for $e \in E \setminus T$.  From this block structure we see that the determinant of these lattice vectors is $2^{\card{V} - 1}$. Thus, by Proposition \ref{prop:cycle-lattice-det}, the collection forms a lattice basis of $\lattice(\cycles(G))$, as desired.
\end{proof}

Our second proof of Proposition \ref{prop:simple-basis} relies on the notion of fundamental bonds we discussed earlier.

\begin{proof}[Proof 2]
  We have that $X_T \subseteq \lattice(\cycles(G))$ by Lemma \ref{lem:twice-edges-in-lattice}.
  Since the lattice has dimension $\card{E} = \card{\cycles_T \union X_T}$ by Proposition \ref{prop:cycle-space-dimension}, it is sufficient to show that $\cycles_T \union X_T$ generates any cycle of $G$.

  For this, let $p = \chi_C$ be the indicator vector of a cycle in $G$, and note that since the intersection of a cycle with a cut has even cardinality, we have
  \[
    \sum_{e \in D} p_e \equiv 0 \pmod{2},
  \]
  for any cut $D$ of $G$.  Letting $q = \sum_{e \in E \setminus T} p_e \indVect{\ci(e, T)}$, we see that for each $e \in E \setminus T$ the $e$-components of $q$ and $p$ are equal.  Further, for each $t \in T$, the $t$-component of $q$ can be written as
  \[
    q_t
    = \sum_{\substack{e \in E \setminus T \\ \text{s.t.\,} t \in \ci(e, T)}} \hspace{-0.7em} p_e
    = \sum_{e \in \bo(t, T)} \hspace{-0.5em} p_e \hspace{0.3em} - \hspace{0.2em} p_t.
  \]
  Denoting the sum in the latter expression above by $S_t$, note that $S_t + p_t$ is the sum of the components of $p$ across the fundamental cut $\bo(t, T)$, which is even.  This implies that $S_t - p_t$ is even.  Letting $\alpha_t = (S_t - p_t) / 2$, we have
  \[
    p = q + \sum_{t \in T} \alpha_t (2 \indVect{t}).
  \]
  Thus $p = \indVect{C}$ is generated by $\cycles_T \union X_T$, and this concludes the proof.
\end{proof}

Note that while the above material is formulated in the setting of graphs, the results may be extended to the more general class of binary matroids satisfying the \defn{lattice of circuits} property (see \cite[Sec.~2]{goddyn_cones_1991}).  We omit these details here, but the arguments involved are substantially similar.

Proposition~\ref{prop:simple-basis} provides a lattice basis of $\lattice(\cycles(G))$ which is useful for many applications, but includes elements outside of the generating collection of cycle indicator vectors.  We next consider lattice bases consisting only of cycle indicator vectors.

\begin{definition}
  If $\cycles'$ is a collection of cycles of a graph $G$ whose indicator vectors form a basis of the lattice $\lattice(\cycles(G))$, we call $\cycles'$ a \defn{lattice cycle basis} of $G$.
\end{definition}

We emphasize that in the above definition, the additive structure is over $\Z$ rather than over $\Z / 2\Z$ as in the classical binary cycle space, so that a priori it is not clear if a lattice cycle basis of a graph always exists.  Indeed, in the context of lattices, a generating set does not always contain a basis.  For example, the set $\{2,3\}$ generates the lattice $\Z$, but $\{2,3\}$ contains no basis of $\Z$.
Somewhat surprisingly, such a phenomenon never occurs for the generator set $\set{\indVect{C}}{C \in \cycles(G)}$ of the lattice $\lattice(\cycles(G))$.  In the following sections, we provide two constructions for such lattice cycle bases.

\subsection{Semi-fundamental Lattice Cycle Bases}
\label{subsec:semi-fund-basis-alg}

We now describe an algorithm to produce a cycle basis using the fundamental cycles of a spanning tree, and some additional cycles which we call \emph{semi-fundamental}.

\begin{definition}
Let $G = (V, E)$ be a connected graph, let $T$ be a spanning tree of $G$, and let $e, f \in G \setminus T$, $e \ne f$. If $\ci(e,T)$ and $\ci(f,T)$ intersect in at least one edge of $T$, we call the symmetric difference
\[
	\ci(e f, T):= \ci(e,T) \symdiff \ci(f,T)
\]
a \defn{semi-fundamental cycle} of $G$ with respect to the tree $T$.

A lattice cycle basis $\cycles'$ of $G$ is called \defn{semi-fundamental} with respect to $T$ if $\cycles'$ contains all of the fundamental cycles with respect to $T$, and all other cycles of $\cycles'$ are semi-fundamental with respect to $T$.
\end{definition}

The following lemma provides a key inductive step for the subsequent algorithm.

\begin{lemma}
  \label{lem:fund-intersection}
  Let $G = (V, E)$ be a 3-edge-connected graph with at least two vertices, and let $T$ be a spanning tree of $G$. Then there exist two fundamental cycles of $T$ that intersect in a single edge of $T$.
\end{lemma}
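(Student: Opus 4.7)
I would begin by picking a non-loop non-tree edge $e^* \in E \setminus T$ whose fundamental cycle contains as few tree edges as possible. Set $\pi^* \defeq \ci(e^*, T) \cap T = \{t_1, \ldots, t_\ell\}$, a path of tree edges with vertices $v_0, v_1, \ldots, v_\ell$ so that $t_i = v_{i-1} v_i$. If $\ell = 1$, then $e^*$ is parallel to $t_1$, and since $3$-edge-connectivity forces $|\bo(t_1, T)| \geq 3$, there is another non-tree edge $f$ with $t_1 \in \ci(f, T)$, giving $\ci(e^*, T) \cap \ci(f, T) \cap T = \{t_1\}$ as desired.

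For $\ell \geq 2$ I would argue by contradiction, assuming no two fundamental cycles share exactly one tree edge. For any non-tree edge $f \neq e^*$ with $\ci(f, T) \cap \pi^* \neq \emptyset$, this intersection is a subpath of $\pi^*$, which I identify with an integer interval $[a_f, b_f] \subseteq \{1, \ldots, \ell\}$; the contradiction hypothesis forces $b_f \geq a_f + 1$, since otherwise $\ci(f, T) \cap \ci(e^*, T) \cap T$ would be a single tree edge.

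The key claim is: if at some position $j$ there exist non-tree edges $f_1$ with $a_{f_1} = j$ and $b_{f_1} \geq j+1$, and $f_2$ with $b_{f_2} = j$ and $a_{f_2} \leq j-1$, then $\ci(f_1, T) \cap \ci(f_2, T) \cap T = \{t_j\}$, contradicting the hypothesis. On $\pi^*$ the intersection is immediately $\{t_j\}$; off of $\pi^*$, the portion of $\ci(f_1, T)$ outside $\pi^*$ attaches to $\pi^*$ only at $v_{j-1}$ and $v_{b_{f_1}}$, while that of $\ci(f_2, T)$ attaches only at $v_{a_{f_2}-1}$ and $v_j$, and the inequalities $a_{f_2} \leq j - 1$ and $b_{f_1} \geq j + 1$ force these four indices to be pairwise distinct, so the associated components of $T \setminus \pi^*$ are disjoint and contribute no shared edges.

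Finally, $3$-edge-connectivity forbids any two tree edges from being in series, so for each $j$ the sets of non-tree edges containing $t_j$ and $t_{j+1}$ in their fundamental cycles differ; under the contradiction hypothesis, this forces the existence of an interval ending at $j$ or an interval starting at $j+1$. A downward induction from $j = \ell$ then shows that no interval starts at any position $j \geq 2$: the base case is vacuous since no size-$\geq 2$ interval can begin at $\ell$, and at each step the induction hypothesis forces an interval ending at $j$, which by the key claim precludes one starting at $j$. Then at $j = 1$, non-seriality of $t_1$ and $t_2$ requires an interval ending at $1$ or starting at $2$, both impossible, yielding the contradiction. The main obstacle is justifying the key claim, which ultimately rests on the fact that components of $T \setminus \pi^*$ attached to distinct vertices of $\pi^*$ are pairwise disjoint in any tree.
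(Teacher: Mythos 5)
Your proof is correct, and it takes a genuinely different route from the paper's. The paper's argument is a local-surgery induction: starting from any two fundamental cycles with nonempty intersection path $P$ (which exists because every fundamental bond $\bo(t,T)$ has at least two chords), it removes the two boundary edges of $P$ from $T$, uses 3-edge-connectivity to find a chord $x$ joining the ``middle'' component of $T$ minus those two edges to one of the two ``end'' components, and shows that replacing one of the original cycles with $\ci(x,T)$ strictly shortens the intersection path; iterating this reaches an intersection of length one. Your argument instead fixes a single chord $e^*$, encodes the intersection of every other fundamental cycle with the path $\pi^* = \ci(e^*,T) \cap T$ as an integer interval on $\{1,\dots,\ell\}$, and derives a contradiction: under the hypothesis that every nonempty such interval has length at least two, a downward induction combined with the non-seriality of consecutive tree edges and your key claim (an interval ending and an interval starting at the same position $j$ yield cycles meeting only in $t_j$) produces an impossibility at $j=1$. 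Both are sound. Two minor remarks: your minimality choice for $e^*$ is never used --- the contradiction hypothesis is global, so any non-loop chord of $T$ serves --- and the paper's surgery version has the practical advantage of being directly algorithmizable by repeated $O(m)$ searches, which the authors exploit in the proof of Theorem~\ref{thm:semi-fund-basis}; your interval argument would need additional work to extract the same computational content.
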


\begin{proof}
  We first prove that if there exist two fundamental cycles $C, C'$ whose intersection is a path $P$ of positive length $k$, then there exist two fundamental cycles whose intersection is a single edge of this path.  If $P$ has length 1 then $C$ and $C'$ are already sufficient to conclude, so suppose that $P$ has length at least 2.

  Let $v_0, \ldots, v_k$ be the vertices of $P$ occurring on $P$ in this order, let $e$ be the edge joining $v_0$ with $v_1$ and $e'$ the edge joining $v_{k-1}$ with $v_k$. Since $e \ne e'$, the forest $T \setminus \{e, e'\}$ has three connected components.  Let $H_P$ be the component which contains $P \setminus \{e, e'\}$, and let $H$ and $H'$ be the connected components containing the vertices $v_0$ and $v_k$, respectively.

  Since $G$ is 3-edge-connected, $G \setminus \{e, e'\}$ is connected, and some edge $x \notin T$ connects $H_P$ to either $H$ or $H'$.  Without loss of generality, assume that $x$ connects $H_P$ to $H$.  In particular, the fundamental cycle $C'' = \ci(x, T)$ contains $e$ and avoids $e'$. Consequently, $P \intersect C''$ is a nonempty proper subpath of $P$ that contains the edge $e$ and does not contain $e'$.

  Since $C$ and $C'$ diverge at $v_0$, the edges of $C \setminus P$ and $C' \setminus P$ incident to $v_0$ are distinct.  In particular, at least one of these edges is distinct from the edge of $C'' \setminus P$ incident to $v_0$.  So, suppose without loss of generality $C$ and $C''$ diverge at $v_0$.
  Since $e'$ is in $C$ but not in $C''$, there is another vertex $v_j$ with $1 \leq j < k$ at which $C$ and $C''$ diverge.  Consequently, the intersection $P' = C \intersect C''$ of fundamental cycles $C$ and $C''$ is a proper sub-path of $P$ of length strictly less than $k$.

  To conclude the lemma, note that if $t \in T$, then the 3-edge-connectivity of $G$ implies that $\bo(t, T)$ contains at least two distinct edges $e_1, e_2 \in E \setminus T$.  In particular $\ci(e_1, T) \intersect \ci(e_2, T) \ni t$ is a nonempty path of $T$.  Thus there exist two fundamental cycles of $T$ with nonempty intersection, so by reverse induction we conclude that there exist fundamental cycles sharing exactly one edge.
\end{proof}

We next prove Theorem \ref{thm:semi-fund-basis}, giving an algorithm to efficiently produce a semi-fundamental lattice cycle basis.

\begin{thm:semi-fund-basis}
	Let $G$ be a connected graph with $m$ edges and $n$ vertices.  Then a lattice cycle basis of $G$ exists, and can be constructed in time $O(mn)$.  If $T$ is any spanning tree of $G$, then the basis may be chosen to be semi-fundamental with respect to $T$.
\end{thm:semi-fund-basis}

\begin{proof}
  Assume first that $G = (V, E)$ is 3-edge-connected.  We inductively construct sequences
  \begin{equation}
  	\label{eq:three-sequences}
  	\big(t_k \big)_{k \in \{1, \ldots, n-1\}},  \ \big( e_k \big)_{k \in \{1, \ldots, n-1\}}, \ \big( f_k \big)_{k \in \{1, \ldots, n-1\}},
  \end{equation}
  where the sequence $(t_k)_{k \in \{1, \ldots, n-1\}}$ gives an ordering of the edges of $T$, while $e_k$ and $f_k$ are edges outside of $T$.  Suppose that $t_1, \ldots, t_{i-1}$ have already been constructed, and let
  \begin{align}
  	\label{eq:Gk:Tk}
  	G_i & \defeq G / \{t_1, \ldots, t_{i-1}\} &  & \text{and} & T_i & \defeq T / \{t_1, \ldots, t_{i-1}\}.
  \end{align}

  In particular, $G_i$ is 3-edge-connected since this property is preserved by graph contraction, and $T_i$ is a spanning tree of $G_i$.  By Lemma~\ref{lem:fund-intersection}, there exist edges $e_i, f_i \in G_i \setminus T_i = G \setminus T$ such that the fundamental cycles of $e_i$ and $f_i$ with respect to $T_i$ in $G_i$ satisfy
  \[
  	\ci(e_i, T_i) \intersect \ci(f_i, T_i) = \{t_i\}
  \]
  for some $t_i \in T_i$.  We will show that the pairs of edges $(e_i, f_i)$ define semi-fundamental cycles of $T$ which produce a semi-fundamental lattice cycle basis.

  The fundamental cycles of $T$ are preserved under tree contractions, in the sense that
  \[
    \ci(e, T_k) = \ci(e,T) / \{t_1, \ldots, t_{k-1}\}
  \]
  holds for every $k$ and every $e \in E \setminus T$.  Consequently, $t_k$ is a common edge of the cycles $\ci(e_k, T)$ and $\ci(f_k, T)$, which implies that the semi-fundamental cycles $\ci(e_k f_k, T)$ are well-defined.  Thus, let
  \[
    \cycles' \defeq \set{\ci(e, T)}{e \in G \setminus T} \union \set{\ci(e_k f_k, T)}{k \in \{1, \ldots, n-1\}},
  \]
  and let $\Lambda$ be the lattice generated by $\cycles'$.  For $A_k \defeq \ci(e_k, T) \intersect \ci(f_k, T)$, we have
  \[
  	2 \indVect{A_k} = \indVect{\ci(e_k, T)} + \indVect{\ci(f_k, T)} - \indVect{\ci(e_k f_k, T)} \in \Lambda.
  \]
  By construction, $t_k \in A_k \subseteq \{t_1, \ldots, t_k\}$.  Thus, $2 \indVect{t_k} = 2 \indVect{A_k} - \sum_{t \in A_k \setminus \{t_k\}} 2 \indVect{t}$.  By induction on $k$ we see that $2 \indVect{t_k} \in \Lambda$ for every $k \in \{1, \ldots, n-1\}$.

  It follows that $\Lambda$ contains the basis of $\lattice(\cycles(G))$ from Proposition~\ref{prop:simple-basis}, and consequently, $\Lambda = \lattice(\cycles(G))$.  Taking into account that $\cycles'$ consists of $m = \dim(\lattice(\cycles(G)))$ cycles, we conclude that $\cycles'$ is a lattice cycle basis of $G$.

  To verify the algorithmic part of the assertion, we need to expand on the procedure of constructing $\cycles'$ suggested above.  The procedure relies on the constructive proof of Lemma~\ref{lem:fund-intersection}, so we first explain how to convert the proof of Lemma~\ref{lem:fund-intersection} into an efficient algorithm.

  In order to find two fundamental cycles that have common edges, pick an edge $t$ of $T$, determine the two trees in the forest $T \setminus \{t\}$ and then iterate through the edges $e \in E$ to detect those edges that connect the two trees in $T \setminus \{t\}$.  There will be at least two such edges by 3-edge-connectedness of $G$.
  This procedure gives a pair $C$, $C'$ of intersecting fundamental cycles.  The proof of Lemma~\ref{lem:fund-intersection} continues by explaining how to decrease the number of edges in the intersection $C \cap C'$ by exchanging one of the two fundamental cycles $C, C'$ with another one.  Every exchange is based on deletion of two edges $e, e'$ from $T$ and looking at edges that connect the trees in the forest $T \setminus \{e, e'\}$.  Every cycle exchange can be carried out in time $O(m)$ using similar ideas.

  The algorithm emerging from Lemma~\ref{lem:fund-intersection} produces a sequence $(C_i, C_i')$, $i = 1, \ldots, s$ of pairs of fundamental cycles with $\{t\} := C_s \cap C_s' \subsetneq \ldots \subsetneq C_1 \cap C_1'$, spending $O(m)$ time units for each pair.
  After contraction of the edge $t$ in the main iteration, one can reuse the pairs $(C_i / t, C_i' / t)$, $i < s$ of contracted cycles in the following iterations.  With this approach, every pair of cycles so computed is used as a pair $\ci(e_i, T_i)$, $\ci(f_i, T_i)$ for some $i$.  Thus, the algorithm spends $O(m)$ time units per edge of $T$, and this amounts to the total running time $O(mn)$.

  Now suppose $G$ is a general connected graph with spanning tree $T$.  By Lemma~\ref{lem:3-edge-conn-reduction}, a cosimplification $\hat{G}$ of $G$ may be constructed in time $O(mn)$.  We further require that $\hat{G}$ be constructed so that only edges of $T$ are contracted, which is possible because each non-trivial series class of $G$ contains at most one edge outside of $T$.  Under this construction, the edges in $T \intersect \hat{G}$ form a spanning forest of the cosimplification.

  Let $G_i = (V_i, E_i), i = 1, \ldots, k$ be the connected components of $\hat{G}$ with $m_1, \ldots, m_k$ edges and $n_1, \ldots, n_k$ vertices respectively, and let $T_i = T \intersect E_i$ be the spanning tree of $G_i$ induced by $T$ under the cosimplification.
  Each $G_i$ is 3-edge-connected, so a semi-fundamental lattice cycle basis $\cycles_i$ of $G_i$ with respect to $T_i$ may be constructed in time $O(m_i n_i)$.  By the second part of Lemma~\ref{lem:3-edge-conn-reduction}, these lattice cycle bases may be lifted to a lattice cycle basis of $G$ in time $O(mn)$, for a total computational time of $O(mn + \sum_i m_i n_i) = O(mn)$.

  Last, we argue that this lifted lattice cycle basis is semi-fundamental with respect to $T$.  If $\hat{C} \in \cycles_i$ and $C$ is the lifting of $\hat{C}$ to $G$, then $\hat{C} \intersect T_i = C \intersect T$ because the edges contracted to form $\hat{G}$ were all edges of $T$.  Hence fundamental and semi-fundamental cycles of $T_i$ in $G_i$ are lifted to fundamental and semi-fundamental cycles of $T$ in $G$.
  Additionally, because the edges $E \setminus T$ are given by $\Union_i E_i \setminus T_i$, each fundamental $\ci_G(e, T)$ is given by the lifting of a fundamental cycle $\ci_{G_i}(e_i, T_i)$ for some $i$ and some $e_i \in E_i$.
\end{proof}

The following observation highlights a potentially useful property of the lattice cycle bases produced by the above algorithm: the lengths of cycles in a semi-fundamental basis are controlled by the diameter of the underlying spanning tree.

\begin{corollary}
	\label{cor:bdd-cycle-lengths}
  Let $G = (V, E)$ be a connected graph with $m$ edges and $n \geq 2$ vertices, and let $T$ be a spanning tree of $G$.  Then a lattice cycle basis of $G$ may be constructed in time $O(mn)$ such that each cycle has length at most $2 \diam(T)$.
\end{corollary}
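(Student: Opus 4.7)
The plan is to invoke Theorem \ref{thm:semi-fund-basis} to produce, in time $O(mn)$, a semi-fundamental lattice cycle basis of $G$ with respect to $T$, and then to verify the length bound by an elementary case analysis: every cycle in such a basis is either a fundamental cycle $\ci(e,T)$ or a semi-fundamental cycle $\ci(ef,T) = \ci(e,T) \symdiff \ci(f,T)$, so it suffices to bound these two types separately.

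For a fundamental cycle $\ci(e,T)$ with $e \in E \setminus T$, write $\ci(e,T) = \{e\} \union P_e$, where $P_e$ is the unique $T$-path joining the endpoints of $e$. Since $|P_e| \le \diam(T)$ by definition of the tree diameter, we obtain $|\ci(e,T)| \le \diam(T) + 1$, which is at most $2\diam(T)$ whenever $\diam(T) \ge 1$, i.e.\ whenever $n \ge 2$.

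For a semi-fundamental cycle $\ci(ef,T) = \ci(e,T) \symdiff \ci(f,T)$, write $\ci(e,T) = \{e\} \union P_e$ and $\ci(f,T) = \{f\} \union P_f$, and observe that since $e, f \notin T$ and $e \ne f$, the symmetric difference equals $\{e,f\} \union (P_e \symdiff P_f)$. The crucial input from the definition of semi-fundamental cycle is that $P_e \intersect P_f$ contains at least one edge of $T$, hence
\[
    |P_e \symdiff P_f| \;=\; |P_e| + |P_f| - 2|P_e \intersect P_f| \;\le\; |P_e| + |P_f| - 2,
\]
so $|\ci(ef,T)| \le |P_e| + |P_f| \le 2 \diam(T)$. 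This single cancellation by 2 is precisely what absorbs the two extra edges $e$ and $f$, and hands us the claimed bound.

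Given how directly this falls out of the definitions once Theorem \ref{thm:semi-fund-basis} is in hand, I do not expect any substantial obstacle. If anything, the only subtlety is being careful that the intersection $P_e \intersect P_f$ of two subpaths of the tree $T$ really does contribute at least one edge (guaranteed by the semi-fundamental hypothesis, not merely a single shared vertex), and handling the degenerate case $n = 1$, for which the assertion is vacuous and the hypothesis $n \ge 2$ in the corollary statement excludes it.
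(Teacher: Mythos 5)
Your proof is correct and follows essentially the same route as the paper: invoke Theorem~\ref{thm:semi-fund-basis} for the $O(mn)$ construction, then bound fundamental cycles by $\diam(T)+1$ and semi-fundamental cycles by $2\diam(T)$. You simply spell out the symmetric-difference accounting that the paper leaves implicit, and you correctly identify that the $n\geq 2$ hypothesis is what makes $\diam(T)+1\leq 2\diam(T)$.
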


\begin{proof}
	Any semi-fundamental lattice cycle basis with respect to $T$ has the desired property.  In particular, a fundamental cycle of $T$ has length at most $\diam(T) + 1$, and a semi-fundamental cycle, as the symmetric difference of two intersecting fundamental cycles, has length at most $2\diam(T)$.
\end{proof}

\subsection{Lattice Cycle Bases by Topological Extension}
\label{subsec:top-ext-basis-alg}

We now present a different approach for the construction of lattice cycle bases.  This approach is dynamic in the sense that lattice bases are built up for a sequence of successively larger graph minors, and the extensions at each step may be chosen independently.  Throughout this section, if $F \subseteq E$ are sets and $x \in \Q^E$, then we write $\proj{F}{x}$ for the standard projection of $x$ onto $\Q^F$.

\begin{definition}
  \label{def:top-one-edge-ext}
  For graphs $G$ and $H$, we write $H \xrightarrow{e} G$, and say that $G$ is a \defn{topological one-edge extension} of $H$, if $G$ is obtained from $H$ by one of the following operations:
  \begin{enumerate}[label={(\Alph*)}]
    \item \label{type:0-divide} A new edge $e$ is added between existing, possibly equal, vertices $a$ and $b$ of $H$.

    \item \label{type:1-divide} An edge $f$ of $H$ is divided into two edges $f_1, f_2$ by a new vertex $a$, and a new edge $e$ is added between $a$ and an existing vertex $b$ of $H$.

    \item \label{type:2-divide} Two distinct edges $f$ and $g$ of $H$ are each divided into two edges $f_1, f_2$ and $g_1, g_2$ by new vertices $a$ and $b$ respectively, and a new edge $e$ is added between $a$ and $b$.
  \end{enumerate}
\end{definition}

We say that the \defn{type} of the topological one-edge extension $H \xrightarrow{e} G$ is one of \ref{type:0-divide}, \ref{type:1-divide}, or \ref{type:2-divide}, depending on which of the above operations $G$ is derived from.  From the definition, we see that $H$ is a graph minor of $G$ obtained by deleting the new edge $e$ and contracting one edge from each split pair, depending on the type of the extension.

\begin{example}
  The complete graph $K_4$ on four vertices is a topological one-edge extension of the 3-edge bond graph.  In particular, the extension is of type \ref{type:2-divide}, and can be realized by picking $f$ and $g$ to be any pair of distinct edges in the 3-edge bond.
\end{example}

For a topological one-edge extension $H \xrightarrow{e} G$, the cycle lattice of $H$ naturally embeds into the cycle lattice of $G$.  Specifically, $H$ is obtained from $G \setminus e$ by contracting zero, one, or two edges, and each contracted edge is part of a non-trivial series class of $G \setminus e$.  The isomorphism of cycle lattices of $H$ and $G \setminus e$ then follows from Lemma \ref{lem:series-contractions}, and the cycle lattice of $G \setminus e$ canonically embeds into the cycle lattice of $G$ because $\cycles(G \setminus e) \subseteq \cycles(G)$.

The embedding of cycle lattices may be described by a linear map $\Q^{E(H)} \to \Q^{E(G)}$, which depends on the type of the extension as follows:
\begin{align*}
  \ref{type:0-divide}:
  &\hspace{2.5mm} x \mapsto
  \begin{bmatrix}
    x \\
    0
  \end{bmatrix}
  \!
  \begin{array}{c}
    {\scriptstyle E(H)} \\
    {\scriptstyle e}
  \end{array}
  &\ref{type:1-divide}:
  &\hspace{2.5mm} x \mapsto
  \begin{bmatrix}
    \proj{E(H) \setminus f}{x} \\
    x_f \\
    x_f \\
    0
  \end{bmatrix}
  \!
  \begin{array}{c}
    {\scriptstyle E(H) \setminus f} \\
    {\scriptstyle f_1} \\
    {\scriptstyle f_2} \\
    {\scriptstyle e}
  \end{array}
\end{align*}
\begin{align*}
  \ref{type:2-divide}:
  &\hspace{2.5mm} x \mapsto
  \begin{bmatrix}
    \proj{E(H) \setminus \{f, g\}}{x} \\
    x_f \\
    x_f \\
    x_g \\
    x_g \\
    0
  \end{bmatrix}
  \!
  \begin{array}{c}
    {\scriptstyle E(H) \setminus \{f, g\}} \\
    {\scriptstyle f_1} \\
    {\scriptstyle f_2} \\
    {\scriptstyle g_1} \\
    {\scriptstyle g_2} \\
    {\scriptstyle e}
  \end{array}
\end{align*}

Topological one-edge extension can be seen to be compatible with 3-edge-connectivity by the following result, whose proof is straightforward.

\begin{lemma}
	\label{lemma:top-ext-3ec}
	If $H \xrightarrow{e} G$ and $H$ is 3-edge-connected, then $G$ is 3-edge-connected as well.
\end{lemma}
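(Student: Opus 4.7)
The plan is to verify $3$-edge-connectivity of $G$ by showing that $G \setminus F$ is connected for every $F \subseteq E(G)$ with $\card{F} \leq 2$, reducing to the corresponding property of $H$ via the natural correspondence between edges of $G$ and edges of $H$.

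The key combinatorial observation is that every vertex newly introduced by the topological extension has degree exactly $3$ in $G$. Concretely, in Type~\ref{type:1-divide} the new vertex $a$ is incident to $f_1, f_2, e$, and in Type~\ref{type:2-divide} both $a$ and $b$ are incident to three edges each. Hence, deleting at most two edges from $G$ can never strip a new vertex of all its incident edges.

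Next I would introduce a projection $\varphi: E(G) \to E(H) \cup \{\star\}$ that sends the new edge $e$ to the symbol $\star$, sends each split half-edge $f_i$ (resp.\ $g_i$) to the parent edge $f \in E(H)$ (resp.\ $g \in E(H)$), and fixes every other edge. For $F \subseteq E(G)$ with $\card{F} \leq 2$, set $F' \defeq \varphi(F) \setminus \{\star\} \subseteq E(H)$. Then $\card{F'} \leq \card{F} \leq 2$, so by the $3$-edge-connectivity of $H$ the graph $H \setminus F'$ is connected.

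Finally, I would lift connectivity from $H \setminus F'$ to $G \setminus F$ in two steps. First, for any two original vertices $u, v \in V(H)$, a $u$-$v$ walk in $H \setminus F'$ lifts to a $u$-$v$ walk in $G \setminus F$ by replacing each traversal of a subdivided edge $f$ (resp.\ $g$) with the two-edge detour $f_1 f_2$ through $a$ (resp.\ $g_1 g_2$ through $b$); these detours are intact precisely because $f \notin F'$ forces $f_1, f_2 \notin F$. Second, for any new vertex $w$, the degree observation provides a surviving edge of $G \setminus F$ out of $w$, which leads either directly to $V(H)$ or to the other new vertex, and so $w$ reaches $V(H)$ in at most two steps. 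The only subtle case is when $F$ is entirely concentrated at one new vertex, e.g.\ $F = \{f_1, f_2\}$: here $F' = \{f\}$ has size $1$, and the new vertex $a$ stays connected to $V(H)$ through the remaining edge $e$. This is the only real obstacle, and it is handled uniformly by the degree argument, so the proof is indeed straightforward.
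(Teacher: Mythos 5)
Your argument is correct and fills in exactly the kind of direct verification the paper has in mind (the paper states the lemma with the remark that the proof is ``straightforward'' and omits it). Two small points worth making explicit: the inequality $\card{F'} \leq \card{F}$ holds simply because $\varphi$ is a function, and in the case where a new vertex's only surviving edge is $e$ leading to the \emph{other} new vertex (Type~\ref{type:2-divide} with, say, $F = \{f_1,f_2\}$), the budget $\card{F} \leq 2$ is already exhausted, so the second new vertex necessarily retains both of its half-edges to $V(H)$ --- this is what guarantees the two-step escape you invoke. Your observation that $f \notin F'$ forces \emph{both} $f_1, f_2 \notin F$ is the crux of the lifting step and you state it correctly.
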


In fact, 3-edge-connectivity of a graph can be characterized in terms of topological one-edge extensions.  The following result is essentially the case $l = 1$ of a theorem of Mader \cite{mader1978reduction} characterizing $(2l+1)$-edge-connected graphs, and can be viewed as a counterpart of Tutte's 1966 theorem characterizing 3-vertex-connectivity.  See \cite[Thm.~7.13]{frank1995connectivity} and \cite[Thm.~3.2.2]{diestel_graph_2017} for details.

\begin{proposition}[Growing a 3-edge-connected graph from a single vertex]
	\label{growing-three-edge-connected}
	A graph $G$ is 3-edge-connected if and only if
  \[
    G_0 \xrightarrow{e_1} G_1 \xrightarrow{e_2} \cdots \xrightarrow{e_k} G_k = G
  \]
  holds for some graphs $G_0, \ldots, G_k$, where $G_0$ is a single vertex.
\end{proposition}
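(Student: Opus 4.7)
The proof splits into two directions.

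\textbf{Sufficiency.} I would proceed by induction on the length $k$ of a topological extension sequence $G_0 \xrightarrow{e_1} G_1 \xrightarrow{e_2} \cdots \xrightarrow{e_k} G$. The base case $k = 0$ is immediate: $G_0$ is a single vertex with no edges, so the $3$-edge-connectivity condition is vacuously satisfied. The inductive step follows directly from Lemma~\ref{lemma:top-ext-3ec}: if $G_{k-1}$ is $3$-edge-connected, then so is $G_k$.

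\textbf{Necessity.} For this direction, I would induct on $\card{V(G)} + \card{E(G)}$. The base case is the single-vertex graph (with no edges), for which the empty sequence suffices. For the inductive step, the plan is to exhibit an edge $e$ of $G$ together with a $3$-edge-connected graph $H$, strictly smaller than $G$, such that $H \xrightarrow{e} G$ is a valid topological one-edge extension; the inductive hypothesis then provides a topological extension sequence for $H$, which, appended with the step $H \xrightarrow{e} G$, yields one for $G$.

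Given a candidate edge $e = uv$, the graph $H$ is constructed as the inverse of the extension operation: remove $e$ from $G$, and for each endpoint of $e$ whose degree in $G$ is exactly $3$ (and which therefore becomes degree~$2$ in $G \setminus e$), contract one of its two remaining incident edges. Because $G$ is $3$-edge-connected all vertices have degree at least~$3$, so each endpoint of $e$ falls into one of these two categories, yielding a total of zero, one, or two contractions and correspondingly making $H \xrightarrow{e} G$ an extension of type~\ref{type:0-divide}, \ref{type:1-divide}, or \ref{type:2-divide} as in Definition~\ref{def:top-one-edge-ext}.

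\textbf{Main obstacle.} The crux of the proof is verifying that the edge $e$ can always be chosen so that the resulting $H$ is $3$-edge-connected. The danger is that $G \setminus e$ may contain a $2$-edge cut that does not arise from the degree-$2$ endpoints of $e$; such a cut corresponds to a $3$-edge cut of $G$ containing $e$ with its other two edges non-incident to the endpoints of $e$, and would survive the smoothing, leaving $H$ with a $2$-edge cut. (One can see that this genuinely occurs by considering the triangular prism with $e$ chosen to be a ``bridging'' edge between the two triangles.) That such a bad configuration can always be avoided by a suitable choice of $e$ is precisely the content of Mader's reduction theorem for $3$-edge-connected graphs \cite{mader1978reduction}; see also \cite[Thm.~7.13]{frank1995connectivity} and \cite[Thm.~3.2.2]{diestel_graph_2017} for expositions. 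The classical proof identifies a good edge via a careful analysis of minimum edge cuts using submodularity of the cut function. Since this result is invoked explicitly in the statement of the proposition, I would cite it rather than reprove it.
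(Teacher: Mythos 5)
Your proposal is correct and follows the paper's approach: the paper does not prove this proposition but attributes it to Mader \cite{mader1978reduction} (with the same secondary references you cite), and you likewise defer the hard necessity direction to that theorem while correctly handling sufficiency via Lemma~\ref{lemma:top-ext-3ec}. Your identification of the main obstacle and the triangular-prism example showing that a careless choice of $e$ can fail is a nice elaboration, but the substantive step is still the citation.
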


We call a sequence of topological one-edge extensions as in the above a \defn{topological extension sequence} for $G$.  The following describes an algorithm to efficiently produce such a sequence.

\begin{thm:top-ext-seq-alg}
	Let $G$ be a 3-edge-connected graph with $n$ vertices and $m$ edges.  Then a topological extension sequence for $G$ exists, and can be constructed in time $O(mn)$.
\end{thm:top-ext-seq-alg}

\begin{proof}
  If $H$ is any graph, let $\Top(H)$ denote the \emph{topological representative} of $H$, defined as the graph obtained from $H$ by suppressing vertices of degree 2.  We will produce a topological extension sequence for $G$ presented as a decomposition of $E(G)$ into a sequence of edge-disjoint paths $P_0, \ldots, P_k$, where the graphs $G_i \defeq \Top(P_0 \union \ldots \union P_i)$ satisfy
  \[
    G_0 \xrightarrow{e_1} G_1 \xrightarrow{e_2} \cdots \xrightarrow{e_k} G_k = G.
  \]

	To start, let $P_0$ be any vertex of $G$, and let $P_1$ be a path starting and ending at the vertex of $P_0$.  Throughout, let $H_i \defeq P_0 \union \ldots \union P_i$.  For $i \geq 2$, suppose $i-1$ paths $P_0, \ldots P_{i-1}$ and corresponding subgraphs $H_0, \ldots, H_{i-1}$ have already been constructed.  If $H_{i-1} = G$, then the algorithm terminates.
  Otherwise, $E(G) \setminus E(H_{i-1})$ is nonempty, and because $G$ is connected, there is a vertex $v_i$ of $H_{i-1}$ which is incident to an edge not contained in $H_{i-1}$.
  We then use depth-first search to generate a path $P_i \subseteq G \setminus H_{i-1}$ starting at $v_i$, ending at some vertex of $H_{i-1}$, and containing no edges or interior vertices in $H_{i-1}$.

  If $\deg_{H_{i-1}}(v_i) > 2$ or if $H_{i-1}$ is a cycle graph, then any path $P_i$ using only edges and interior vertices outside $H_{i-1}$ is sufficient.  If $\deg_{H_{i-1}}(v_i) = 2$ and $H_{i-1}$ is not a cycle graph, then $v_i$ is an interior vertex of a path $Q_i$ with end vertices of degree at least 3 in $H_{i-1}$, and with interior vertices of degree exactly 2 in $H_{i-1}$.  In this case, we search for a path $P_i$ starting at $v_i$ and ending at some vertex $w_i$ of $H_{i-1}$ which is not also an interior vertex of $Q_i$.
  This ensures that $P_i$ induces a topological one-edge extension of $G_{i-1}$ of type either \ref{type:1-divide} or \ref{type:2-divide}, and does not connect two new vertices on the same edge in $G_{i-1}$.
  Such a path exists because otherwise the removal of the two edges of $Q_i$ incident with its endpoints would disconnect $G$, contradicting 3-edge-connectedness.

  As an optimization, after each path is completed in this way, inspect the edges of $G$ for edges incident to two vertices of degree at least 3 in $H_i$.  If $e$ is such an edge, then the additional path $P$ consisting only of the edge $e$ may be added to the output sequence.  The additional paths of this form do not require depth-first search in $G$, and the overall computational cost of adding these paths can be seen to be $O(mn)$.

  The correctness of the algorithm follows from Lemma \ref{lemma:top-ext-3ec}.  For the running time, we note that the number of iterations requiring a depth-first search is at most $3n$.  This follows because each path $P_i$ formed by a depth-first search either contains a vertex outside of $H_{i-1}$, or has an endpoint in $H_{i-1}$ of degree 2.  In either of these cases, some vertex of $G$ with degree less than 3 in $H_{i-1}$ has strictly larger degree in $H_i$, and this can occur at most $3n$ times throughout the construction.  Thus the search-based steps are computed in time $O(mn)$.  Because the single-edge paths between degree 3 vertices are also computed in time $O(mn)$, the result follows.
\end{proof}

In the following we provide an explicit construction to extend a basis of the cycle lattice from a 3-edge-connected graph to a topological one-edge extension.  This provides the primary tool which we will use to inductively construct lattice cycle bases from scratch.

\begin{proposition}
	\label{prop:growing-basis}
	Let $H$ be a 3-edge-connected graph, and let $G$ be a topological one-edge extension of $H$ with $m$ edges.  Then a set of cycles exists whose indicator vectors extend any basis of $\lattice(\cycles(H))$ to a basis of $\lattice(\cycles(G))$.  Such a set can be computed in time $O(m)$.
\end{proposition}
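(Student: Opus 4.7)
The plan is to treat the three types \ref{type:0-divide}, \ref{type:1-divide}, \ref{type:2-divide} of topological one-edge extensions from Definition~\ref{def:top-one-edge-ext} in parallel, exhibiting a small set of explicit cycles of $G$ through the new edge $e$ and verifying via a determinant computation that they extend any basis of $\lattice(\cycles(H))$. Let $\phi \colon \Z^{E(H)} \hookrightarrow \Z^{E(G)}$ be the embedding described after that definition, and let $k = |V(G)| - |V(H)| \in \{0, 1, 2\}$ according to the type. Since $G$ is 3-edge-connected by Lemma~\ref{lemma:top-ext-3ec}, Proposition~\ref{prop:cycle-lattice-det} gives $\gpIndex{\lattice(\cycles(G))}{\phi(\lattice(\cycles(H)))} = 2^k$, while $|E(G)| - |E(H)| = k + 1$. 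So the task is to supply $k + 1$ cycles of $G$ whose classes in the free quotient $\Z^{E(G)} / \phi(\Z^{E(H)})$ generate a sublattice of index $2^k$.

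The cycles I will use all contain $e$. For type \ref{type:0-divide}, take $C = P \cup \{e\}$, where $P$ is any path in $H$ from $a$ to $b$; if $a = b$, then $e$ is a loop and $C = \{e\}$. For type \ref{type:1-divide}, let $u_1, u_2$ be the original endpoints of the subdivided edge $f$, and take $C_i = \{e, f_i\} \cup P_i$ for $i = 1, 2$, where $P_i$ is a path in $H \setminus f$ from $b$ to $u_i$; such paths exist because $H$ is bridgeless. For type \ref{type:2-divide}, with $f, g$ subdivided and original endpoints $u_1, u_2$ and $v_1, v_2$ respectively, take the three cycles $C_{ij} = \{e, f_i, g_j\} \cup P_{ij}$ for $(i, j) \in \{(1,1), (1,2), (2,1)\}$, where $P_{ij}$ is a path in $H \setminus \{f, g\}$ from $u_i$ to $v_j$; such paths exist because $H$ being 3-edge-connected implies $H \setminus \{f, g\}$ is connected.

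To verify these cycles work, I reduce modulo $\phi(\Z^{E(H)})$. The quotient is a free $\Z$-module of rank $k+1$, with basis given by $\bar\chi_e$ always, together with $\bar\chi_{f_1}$ (types \ref{type:1-divide}, \ref{type:2-divide}) and $\bar\chi_{g_1}$ (type \ref{type:2-divide}); the identities $\bar\chi_{f_1} + \bar\chi_{f_2} = 0$ and $\bar\chi_{g_1} + \bar\chi_{g_2} = 0$ follow from $\chi_{f_1} + \chi_{f_2}, \chi_{g_1} + \chi_{g_2} \in \phi(\Z^{E(H)})$. Reading off the chosen cycles in this basis gives integer matrices of absolute determinants $1$, $2$, and $4$ in the three cases; for type \ref{type:2-divide} the matrix is
\[
\begin{pmatrix} 1 & 1 & -1 \\ 1 & -1 & 1 \\ 1 & 1 & 1 \end{pmatrix},
\]
whose determinant is $-4$. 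Matching the required index $2^k$, the extended system is a basis of $\lattice(\cycles(G))$.

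Algorithmically, each of at most three paths can be found by one breadth-first search in $H$ with zero, one, or two edges deleted, at cost $O(m)$; type classification and endpoint lookup from the data of the extension take constant time. The main technical obstacle is the determinant verification in type \ref{type:2-divide}, which reduces to the $3 \times 3$ computation shown above; the other two types are routine.
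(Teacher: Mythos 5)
Your proof takes essentially the same approach as the paper: for each of the three extension types you build explicit cycles through the new edge $e$ from paths in $H$, $H \setminus f$, or $H \setminus \{f,g\}$, and then you certify the resulting collection by a determinant computation. The paper writes the determinant of the full $E(G) \times E(G)$ matrix and reduces it to a small $1\times 1$, $2\times 2$, or $3\times 3$ block by row subtraction; you instead pass to the quotient $\Z^{E(G)} / \phi(\Z^{E(H)})$ (correctly noting it is free of rank $k+1$, which is exactly what makes the block-triangular argument legitimate) and read off the same small matrix there. These are the same computation in two notations, and your $3\times 3$ determinant is $-4$ as needed.

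One technical misstatement should be fixed: $\gpIndex{\lattice(\cycles(G))}{\phi(\lattice(\cycles(H)))}$ is not a finite group index (indeed $\phi(\lattice(\cycles(H)))$ has rank $|E(H)|$ inside the rank-$|E(G)|$ lattice $\lattice(\cycles(G))$), so Proposition~\ref{prop:cycle-lattice-det} does not literally supply that index. What Proposition~\ref{prop:cycle-lattice-det} gives is $\clDet{G}/\clDet{H} = 2^{|V(G)|-|V(H)|} = 2^k$; together with the fact that $\phi$ identifies $\Z^{E(H)}$ with a saturated sublattice of $\Z^{E(G)}$ (i.e., $Q$ is free), this is exactly the condition that the small-matrix determinant must equal $\pm 2^k$ for the extended collection to be a basis of $\lattice(\cycles(G))$. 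Replacing the index claim with the determinant ratio makes your argument complete and correct.
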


\begin{proof}
	Let $\basis = \{b_1, \ldots, b_m\}$ be a lattice basis of $\lattice(\cycles(H))$.  We argue by cases for the three possible types of the topological one-edge extension $H \xrightarrow{e} G$.  We will use the notation for such extensions described in Definition \ref{def:top-one-edge-ext}, and we will write $\Phi$ for the natural linear map embedding $\lattice(\cycles(H))$ into $\lattice(\cycles(G))$.
  In each case, once an extending collection of cycles $\cycles' \subseteq \cycles(G)$ of the correct cardinality has been determined, we verify that the collection $\Phi(\basis) \union \set{\indVect{C}}{C \in \cycles'}$ is a basis of $\lattice(\cycles(G))$ by checking that the new collection has determinant equal to $\clDet{G}$.

  For type \ref{type:0-divide}, a cycle $C$ in $G$ which contains $e$ can be constructed in time $O(m)$ by finding a simple path $P$ between $a$ and $b$ in $H$.  Then we have
  \begin{align*}
    \det (\Phi(\basis), \indVect{C})
    &= \det \left[ \!\!
    \begin{array}{ccc|c}
      b_1 & \cdots & b_m  & \chi_P \\
      \hline
      0 & \cdots & 0 & 1
    \end{array} \!\!
    \right]
    \!
    \begin{array}{c}
      {\scriptstyle E(H)} \\
      {\scriptstyle e}
    \end{array} \\
    &= \det (\basis).
  \end{align*}
  Thus the determinant of the new collection is equal to $\clDet{H} = \clDet{G}$, and we can let $\cycles' = \{C\}$.

  For type \ref{type:1-divide}, we can construct $C_1, C_2 \in \cycles(G)$ satisfying $C_i \cap \{f_1, f_2, e\} = \{f_i, e\}$ in time $O(m)$ as follows.
  If $v_i$ is the endpoint of $f$ in $H$ incident to $f_i$ in $G$, then a simple path $P_i$ can be found between $v_i$ and $b$ in the connected graph $H \setminus f$, and we can set $C_i = P_i \union \{f_i, e\}$.
  Setting $\Delta = \det( \Phi(\basis), \indVect{C_1}, \indVect{C_2} )$, we have
  \begin{align*}
    \Delta
    &= \det \left[ \!\!
    \begin{array}{lcl|ll} \proj{E(H) \setminus f}{b_1} & \cdots & \proj{E(H) \setminus f}{b_m} & \indVect{P_2} &  \indVect{P_1} \\
		\proj{f}{b_1} & \cdots & \proj{f}{b_m} & 0 & 1 \\
    \hline
    \proj{f}{b_1} & \cdots & \proj{f}{b_m} & 1 & 0 \\
    \phantom{(}0 & \cdots & \phantom{(}0 & 1 & 1
		\end{array} \!\!
    \right]
    \!
    \begin{array}{c}
      {\scriptstyle E(H) \setminus f} \\
      {\scriptstyle f_1} \\
      {\scriptstyle f_2} \\
      {\scriptstyle e}
    \end{array} \\
    &= \det (\basis) \cdot \det\! \begin{bmatrix*}[r] 1 & -1 \\ 1 & 1 \end{bmatrix*},
  \end{align*}
  where the last line is by subtracting the row indexed by $f_1$ from the row indexed by $f_2$ and taking determinants along the block diagonal.
  In this case, the determinant of the new collection is given by $2\clDet{H} = \clDet{G}$, so the collection $\cycles' = \{C_1, C_2\}$ satisfies the desired conditions.

  For type \ref{type:2-divide}, we construct three cycles $C_1$, $C_2$, and $C$ satisfying $C_i \cap \{f_1, f_2, g_1, g_2, e\} = \{f_i, g_i, e\}$ for $i = 1, 2$, and $C \cap \{f_1, f_2, g_1, g_2, e\} = \{f_1, g_2, e\}$.
  For $i = 1, 2$, let $v_i$ be the endpoint of $f$ incident to $f_i$ in $G$, and let $w_i$ be the endpoint of $g$ incident to $g_i$ in $G$.  Since $H$ is 3-edge-connected, the deletion $H \setminus \{f, g\}$ remains connected.
  Thus we can find simple paths $P_1$, $P_2$, $P$ in $H \setminus \{f, g\}$ in time $O(m)$ where $P_i$ joins $v_i$ and $w_i$ and $P$ joins $v_1$ and $w_2$.  Then we can set $C_i = P_i \union \{ f_i, g_i, e \}$ and $C = P \union \{f_1, g_2, e\}$.
  Setting $\Delta = \det( \Phi(\basis), \indVect{C_1}, \indVect{C_2}, \indVect{C} )$, we have
  \begin{align*}
    \Delta
    &= \det \left[ \!\!
    \begin{array}{lcl|lll}
      \proj{E(H) \setminus \{f, g\}}{b_1}
        & \cdots
        & \proj{E(H) \setminus \{f, g\}}{b_m}
        & \indVect{P_1}
        & \indVect{P_2}
        & \indVect{P} \\
  		\proj{f}{b_1} & \cdots & \proj{f}{b_m} & 1 & 0 & 1 \\
      \proj{g}{b_1} & \cdots & \proj{g}{b_m} & 1 & 0 & 0 \\
      \hline
  		\proj{f}{b_1} & \cdots & \proj{f}{b_m} & 0 & 1 & 0 \\
      \proj{g}{b_1} & \cdots & \proj{g}{b_m} & 0 & 1 & 1 \\
      \phantom{(}0  & \cdots & \phantom{(}0  & 1 & 1 & 1 \\
		\end{array} \!\!
    \right]
    \!
    \begin{array}{c}
      {\scriptstyle E(H) \setminus \{f, g\}} \\
      {\scriptstyle f_1} \\
      {\scriptstyle g_1} \\
      {\scriptstyle f_2} \\
      {\scriptstyle g_2} \\
      {\scriptstyle e}
    \end{array} \\
    &= \det (\basis) \cdot \det\!
    \begin{bmatrix*}[r]
      -1 & 1 & -1 \\
      -1 & 1 &  1 \\
       1 & 1 &  1
     \end{bmatrix*},
  \end{align*}
  where the last line is by subtracting the rows indexed by $f_1$ and $g_1$ from the rows indexed by $f_2$ and $g_2$ and taking determinants along the block diagonal.  In this case, the determinant of the new collection is given by $4 \clDet{H} = \clDet{G}$, so the collection $\cycles' = \{C_1, C_2, C\}$ satisfies the desired conditions.
\end{proof}

If $G_0 \xrightarrow{e_1} G_1 \xrightarrow{e_2} \cdots \xrightarrow{e_k} G_k$ is a topological extension sequence, define a \defn{compatible chain} of lattice cycle bases to be a nested sequence $\cycles_0 \subset \cycles_1 \subset \cdots \subset \cycles_k$, where $\cycles_i$ is a lattice cycle basis of $G_i$ for each $i$.  The following aggregates the preceding results to give an algorithm which produces a topological extension sequence and a compatible chain of lattice cycles bases of a 3-edge-connected graph $G$.  In particular, this produces a lattice cycle basis of $G$.

\begin{thm:top-ext-comp-chain}
  Let $G$ be a 3-edge-connected graph with $m$ edges and $n$ vertices.  Then a topological extension sequence of $G$ and a compatible chain of lattice cycle bases can be constructed in time $O(mn)$.
\end{thm:top-ext-comp-chain}

\begin{proof}
  From Theorem~\ref{thm:top-ext-seq-alg}, a topological extension sequence $G_0 \xrightarrow{e_1} G_1 \xrightarrow{e_2} \cdots \xrightarrow{e_k} G_k = G$ of $G$ can be computed in time $O(mn)$, and such an extension sequence has length $O(m)$.  Inductively we produce a lattice cycle basis $\cycles_i$ of $G_i$ for each $i$ by extending $\cycles_{i-1}$.

  As an auxiliary data structure, we maintain a spanning tree $T_i$ of $G_i$ at each step of the induction as follows.  For each edge $x$ divided in two by the topological extension $G_{i-1} \xrightarrow{e_i} G_i$, if $x \in T_{i-1}$, then include both of the resulting edges in $T_i$, and if $x \notin T_{i-1}$, then include only one of the resulting edges in $T_i$.  In particular, $e_i \notin T_i$ for any $i$.

  For each topological one-edge extension $G_{i-1} \xrightarrow{e_i} G_i$ of type~\ref{type:1-divide} or~\ref{type:2-divide}, Proposition~\ref{prop:growing-basis} shows how to construct $\cycles_i$ from $\cycles_{i-1}$ in time $O(m)$.  Any topological one-edge extension of these types adds at least one new vertex, so there are at most $O(n)$ such extensions in the sequence, and the overall time needed for computing these lattice cycle bases is $O(mn)$.

  For topological one-edge extensions of type~\ref{type:0-divide}, an extending cycle can be produced from any path between the endpoints of $e_i$ excluding this new edge.  In particular, we make use of the path between the endpoints of $e_i$ in the spanning tree $T_i$, which can be computed in time $O(n)$.  Since there are $O(m)$ such extensions in the extension sequence, the overall time needed for computing these lattice cycle bases is also $O(mn)$.
\end{proof}

As for Theorem~\ref{thm:semi-fund-basis}, this $O(mn)$ algorithm for producing a lattice cycle basis for 3-edge-connected graphs extends by Lemma~\ref{lem:3-edge-conn-reduction} to an $O(mn)$ algorithm for arbitrary connected graphs.
It is interesting to note that the extensions at each step of the above construction are independent of the other steps, which implies a multiplicative mode of growth for the number of lattice cycle bases of a 3-edge-connected graph in terms of its number of edges.

\section{Applications to Linear Hulls}
\label{sec:applications-lin-hulls}

The results discussed so far in this paper focus on the Abelian group structure of the cycle lattice, whose properties are closely connected to the combinatorics of the underlying graph.  We now present some consequences of these results to the $A$-linear hull of cycles of a graph $G$ with respect to an Abelian group $A$.
In the following, we view $\linhull_A(\cycles(G))$ as a subgroup of $A^E \simeq \Z^E \tensor_{\Z} A$.

\begin{thm:lin-hull-groups}
	Let $G = (V, E)$ be a 3-edge-connected graph with $m$ edges and $n$ vertices, and let $A$ be an Abelian group.  Then
	\[
		\linhull_A(\cycles(G)) \simeq (2A)^{n-1} \oplus A^{m-n+1}.
	\]
\end{thm:lin-hull-groups}

\begin{proof}
  Let $T$ be a spanning tree of $G$, and let $\phi_0 : A^E \to A^E$ be defined by
  \[
    \phi_0: a \mapsto \sum_{t \in T} a_t \indVect{t} + \sum_{e \in E \setminus T} a_e \indVect{\ci(e, T)}.
  \]
  By Proposition \ref{prop:simple-basis}, the vectors $\cycles_T = \set{\indVect{\ci(e, T)}}{e \in E \setminus T}$ with the vectors $X_T = \set{2 \indVect{t}}{t \in T}$ give a lattice basis of $\lattice(\cycles(G))$.  Consequently, for $a \in A^T$ and $b \in A^{E \setminus T}$,
	\[
		\phi_0(2a, b) = \sum_{t \in T} a_t ( 2 \indVect{t}) + \sum_{e \in E \setminus T} b_e \indVect{\ci(e, T)}
	\]
  is an element of $\linhull_A(\cycles(G))$, and in particular $\phi_0$ restricts to a homomorphism $\phi : (2A)^T \oplus A^{E \setminus T} \to \linhull_A(\cycles(G))$.

  Because $X_T \union \cycles_T$ is a basis of $\lattice(\cycles(G))$, we conclude that $\phi$ is surjective.
  To show that $\phi$ is injective, suppose $a \in A^T$ and $B \in A^{E \setminus T}$ such that $\phi(2a, b) = 0$.  Each coordinate $e \in E \setminus T$ has a nonzero value in the sum $\phi(2a, b)$ only for the summand corresponding to the cycle indicator vector $\indVect{\ci(e, T)}$, so we conclude that $b = 0$.
  Subsequently, each coordinate $t \in T$ has a nonzero value in the remaining sum $\phi(2a, 0)$ only for the summand corresponding to the indicator vector $\indVect{t}$, so we likewise have $2a = 0$.
\end{proof}

For finite Abelian groups, the previous result gives a simple method to determine the primary decomposition of $\linhull_A(\cycles(G))$ from the primary decomposition of $A$.
Each $p$-primary group $C_{p^k}$, $p \neq 2$ in the decomposition of $A$ introduces $m$ copies in the primary decomposition of $\linhull_A(\cycles(G))$, and each $2$-primary group $C_{2^k}$ introduces $m-n+1$ copies of $C_{2^k}$ along with $n-1$ copies of $C_{2^{k-1}}$.

As another application of Theorem~\ref{thm:lin-hull-groups}, we may apply the result to an arbitrary field to obtain the following.

\begin{thm:lin-hull-vector-sp}
  Let $G = (V, E)$ be a 3-edge-connected graph with $m$ edges and $n$ vertices, and let $K$ be a field of characteristic $p$.  Then $\linhull_K(\cycles(G))$ is a $K$-vector space of dimension
	\[
		\dim_K\!\big(\linhull_K(\cycles(G))\big)
    =
		\begin{cases}
			m, & \text{if } p \ne 2,
			\\ m-n+1, & \text{if } p = 2.
		\end{cases}
	\]
  If $p \neq 2$, then any lattice basis of $\lattice(\cycles(G))$ reduces modulo $p$ to a linear basis of $\linhull_K(\cycles(G))$.  If $p = 2$, then any basis of the classical binary cycle space maps to a linear basis of $\linhull_K(\cycles(G))$ under the natural inclusion map.
\end{thm:lin-hull-vector-sp}

\begin{proof}
	Let $U = \linhull_K(\cycles(G))$.  By Theorem~\ref{thm:lin-hull-groups}, the $K$-dimension of $U$ is given by $\dim_K\!\big((2K)^{n-1} \oplus K^{m-n+1}\big)$.  When $K$ has characteristic 2, we have $2K = \{0\}$, and otherwise, $2K = K$.

  For the vector space bases, the argument of Theorem~\ref{thm:lin-hull-groups} shows that the image of the lattice basis $\basis$ of Proposition \ref{prop:simple-basis} in $U$ generates the linear hull.  If $p \neq 2$, then $\basis$ has as many vectors as the dimension, and thus forms a vector space basis of $U$.
  If $\basis'$ is a lattice basis of $\lattice(\cycles(G))$, then its image in $U$ is a $\Z$-invertible linear transformation of $\basis$, and thus also forms a vector space basis.

  If $p = 2$, then the $n-1$ vectors $2\indVect{t}$ of $\basis$ map to 0 in $U$, and the remaining $m-n+1$ vectors $\basis_0$ form a basis of $U$, again by dimension considerations.
  The collection $\basis_0$ forms a basis of the binary cycle space of $G$ by Proposition \ref{prop:classical-cycle-space}, so any basis $\basis_0'$ of the binary cycle space is an invertible transformation of $\basis_0$ over $\Z / 2\Z \subseteq K$, and thus likewise maps to a vector space basis of $U$.
\end{proof}

\bibliographystyle{abbrv}
\bibliography{BibCircuits}

\begin{thebibliography}{10}

\bibitem{aliev2020optimizing}
I.~Aliev, G.~Averkov, J.~A. {De Loera}, and T.~Oertel.
\newblock Optimizing sparsity over lattices and semigroups, 2020.

\bibitem{bermond+jackson+jaeger}
J.~Bermond, B.~Jackson, and F.~Jaeger.
\newblock Shortest coverings of graphs with cycles.
\newblock {\em J. Comb. Theory, Ser. {B}}, 35(3):297--308, 1983.

\bibitem{bjorner_homology_1992}
A.~Bj\"orner.
\newblock The homology and shellability of matroids and geometric lattices.
\newblock In {\em Matroid {Applications}}, volume~40 of {\em Encyclopedia of
  {Mathematics} and {Its} {Applications}}. Cambridge University Press, 1992.

\bibitem{bollobas_diameter_2004}
B.~Bollob{\'a}s and O.~Riordan.
\newblock The {Diameter} of a {Scale}-{free} {Random Graph}.
\newblock {\em Combinatorica}, 24(1):5--34, Jan. 2004.

\bibitem{diestel_graph_2017}
R.~Diestel.
\newblock {\em Graph {Theory}}.
\newblock Graduate {Texts} in {Mathematics}. Springer-Verlag, Heidelberg, 5th
  edition, 2017.

\bibitem{MR1692279}
P.~Erd\H{o}s, R.~J. Faudree, C.~C. Rousseau, and R.~H. Schelp.
\newblock The number of cycle lengths in graphs of given minimum degree and
  girth.
\newblock volume 200, pages 55--60. 1999.

\bibitem{frank1995connectivity}
A.~Frank.
\newblock Connectivity and network flows.
\newblock {\em Handbook of Combinatorics}, 1:111--177, 1995.

\bibitem{goddyn_cones_1991}
L.~A. Goddyn.
\newblock Cones, lattices and {H}ilbert bases of circuits and perfect
  matchings.
\newblock In {\em Graph Structure Theory}, volume 147 of {\em Contemporary
  Mathematics}, pages 419--439. American Mathematical Society, 1993.

\bibitem{goddyn_cuts}
L.~A. Goddyn, T.~Huynh, and T.~Deshpande.
\newblock On {H}ilbert bases of cuts.
\newblock {\em Discrete Mathematics}, 339(2):721--728, 2016.

\bibitem{MR766494}
A.~Gy\'{a}rf\'{a}s, J.~Koml\'{o}s, and E.~Szemer\'{e}di.
\newblock On the distribution of cycle lengths in graphs.
\newblock {\em J. Graph Theory}, 8(4):441--462, 1984.

\bibitem{hoory_expander_2006}
S.~Hoory, N.~Linial, and A.~Wigderson.
\newblock Expander graphs and their applications.
\newblock {\em Bulletin of the American Mathematical Society}, 43(4):439--561,
  2006.

\bibitem{kavitha_cycle_2009}
T.~Kavitha, C.~Liebchen, K.~Mehlhorn, D.~Michail, R.~Rizzi, T.~Ueckerdt, and
  K.~Zweig.
\newblock Cycle bases in graphs characterization, algorithms, complexity, and
  applications.
\newblock {\em Computer Science Review}, 3(4):199--243, 2009.

\bibitem{LLovasz}
L.~Lov\'asz.
\newblock Matching structure and the matching lattice.
\newblock {\em Journal of Combinatorial Theory, Series B}, 43(2):187--222,
  1987.

\bibitem{mader1978reduction}
W.~Mader.
\newblock A reduction method for edge-connectivity in graphs.
\newblock In {\em Annals of Discrete Mathematics}, volume~3, pages 145--164.
  Elsevier, 1978.

\bibitem{S78}
P.~D. Seymour.
\newblock {S}ums of circuits.
\newblock In J.~Bondy, U.~Murty, and W.~Tutte, editors, {\em Graph theory and
  related topics}, pages 341--355. Academic Press, 1979.

\bibitem{MR2096823}
J.~Verstra\"{e}te.
\newblock On the number of sets of cycle lengths.
\newblock {\em Combinatorica}, 24(4):719--730, 2004.

\bibitem{cqzhang}
C.~Zhang.
\newblock {\em Integer flows and cycle covers of graphs}, volume 205 of {\em
  Pure and Applied Mathematics}.
\newblock Marcel Dekker, 1997.

\end{thebibliography}

\end{document}